\documentclass[11pt,reqno,oneside]{amsart}

\usepackage[latin1]{inputenc}
\usepackage{amssymb,amsmath,amstext,amsthm,amsfonts}
\vfuzz2pt
\usepackage{amsfonts}
\usepackage{amsmath}
\usepackage{amssymb}
\usepackage{amsthm}
\usepackage{pstricks,pstricks-add,pst-math,pst-xkey}

\newcommand{\bdm}{\begin{displaymath}}
\newcommand{\edm}{\end{displaymath}}

\newcommand{\rd}{\mathbb{R}^d}
\newcommand{\xa}{X_{\bar{a}}}
\newcommand{\freq}{\mathrm{freq}}

\theoremstyle{definition}
\newtheorem{thm}{Theorem}

\newtheorem{lem}[thm]{Lemma}

\newtheorem{prop}[thm]{Proposition}
\newtheorem{rem}{Remark}
\newtheorem{eg}{Example}

\newtheorem{cor}[thm]{Corollary}

\setlength{\textwidth}{15.2cm}

\setlength{\oddsidemargin}{.7cm}
\setlength{\evensidemargin}{.7cm} \setlength{\topmargin}{1.5cm}

\title[Statistical Stability for Multi-Substitution Tiling Spaces]{Statistical Stability  for \\ Multi-Substitution Tiling Spaces}

\author[R. Pacheco]{Rui Pacheco}
\address{Universidade da Beira Interior\\
Rua Marquês d'Ávila e Bolama, 6200-001 Covilhã, Portugal}
\email{rpacheco@ubi.pt} \urladdr{http://www.mat.ubi.pt/$\sim$rpacheco}

\author[H. Vilarinho]{Helder Vilarinho}
\address{Universidade da Beira Interior\\
Rua Marquês d'Ávila e Bolama, 6200-001 Covilhã, Portugal}
\email{helder@ubi.pt} \urladdr{http://www.mat.ubi.pt/$\sim$helder}

\date{\today}

\thanks{The authors were partially supported by the Portuguese Government through FCT, under the project PEst-OE/MAT/UI0212/2011 (CMUBI)}

\keywords{multi-substitutions, tiling spaces, dynamical systems, invariant measures, statistical stability}
\begin{document}
\begin{abstract}
Given a finite set $\{S_1\dots,S_k  \}$ of substitution maps acting on a certain finite number (up to translations) of tiles in $\rd$, we consider the multi-substitution tiling space associated to each  sequence $\bar a\in \{1,\ldots,k\}^{\mathbb{N}}$. The action by translations on such spaces gives rise to uniquely ergodic dynamical systems.
In this paper we investigate the rate of convergence for  ergodic limits  of patches frequencies  and prove that these limits  vary continuously with  $\bar a$.
\end{abstract}

\subjclass[2010]{37A15, 37A25, 52C22}
\maketitle

\section{Introduction}

Roughly speaking, a \emph{tiling} of $\mathbb{R}^d$ is an arrangement of tiles that covers $\mathbb{R}^d$ without overlapping. An important class of tilings is that of \emph{self-similar tilings}.
In order to construct a self-similar tiling $x$, one starts with a finite number (up to translation) of tiles  and a \emph{substitution map} $S$ that determines how to inflate  and subdivide these tiles into  certain configurations of the same tiles. Many examples can be found in \cite{F,GS}. The \emph{substitution tiling space} $X_S$  is the closure of all the translations of $x$ in an appropriate metric, with respect to which $X_S$ is compact and the group $\rd$ acts continuously on $X_S$ by translations, defining a \emph{substitution  dynamical system}. The ergodic and spectral properties of such dynamical systems were studied in detail by Solomyak \cite{Sol}.

A substitution tiling space $X_S$ is then associated to an hierarchy. The zero level  of this hierarchy is constituted  by the initial set of tiles and  the level $i>0$ is constituted by the patches of tiles obtained from those of level $i-1$ by applying the substitution map $S$. Recently, Frank and Sadun \cite{FS} have introduced a framework to handling with  general hierarchical (\emph{fusion}) tiling spaces, where the procedure for obtaining patches of level $i$ from those of level $i-1$ is not necessarily an ``inflate-subdivide" procedure and can depend on $i$.  However,
many of the ergodic and spectral properties available for substitution dynamical systems  are hard to achieve in such generality.

In the present paper we deal with  \emph{multi-substitution tiling spaces}, also referred to in the literature as \emph{mixed substitution} tiling spaces \cite{GM} or \emph{$S$-adic systems \cite{Du,Fe}}. They form a particular class of hierarchical  tiling spaces which includes the substitution tiling spaces. A multi-substitution tiling space is determined by a finite number  (up to translation) of tiles, a finite set $\mathcal{S}=\{S_1\dots,S_k  \}$ of substitutions maps acting on these tiles and a sequence $\bar a=(a_1,a_2,\ldots)$ in $\Sigma:=\{1,\ldots,k\}^{\mathbb{N}}$. In the corresponding hierarchy, the patches of the level $i$ are obtained from those of level $i-1$ by applying the substitution map $S_{a_{i}}$. The continuous action of $\rd$ by translations on a multi-substitution tiling space $X_{\bar {{a}}}(\mathcal{S})$ defines a uniquely ergodic  dynamical system. The unique measure $\mu_{\bar a,\mathcal{S}}$ is closely related with the patch  frequencies  in tilings of $X_{\bar{{a}}}(\mathcal{S})$. In this paper we prove that, in the usual topology of $\Sigma$, the ergodic limits of patch frequencies vary continuously with  $\bar a$ (theorem \ref{ss}).  Moreover, we  prove that
the convergence of patch frequencies to their ergodic limits is locally uniform in some open subset of $\Sigma$ (theorem \ref{cu}).

\section{Tilings and Substitutions}
We start by recalling some standard definitions and results concerning substitution tiling spaces. For details, motivation and examples see \cite{F,GS,Rob,Sol}. We introduce also the concept of \emph{strongly recognizable} substitution. As we will see later, such substitutions provide isomorphisms between  ergodic dynamical systems associated to certain multi-substitution tiling spaces.

\vspace{.20in}

Consider $\mathbb{R}^d$ with its usual Euclidean norm $\|\cdot\|$ and write $B_r=\{\vec v\in\mathbb{R}^d:\,\|\vec v\|\leq r\}$. A set $D\subset \mathbb{R}^d$ is called a \emph{tile} if it is compact, connected and equal to the closure of its interior. A \emph{patch}  is a collection  $x=\{D_i\}_{i\in I}$ of tiles such that $D^{^\circ}_i\cap D^{^\circ}_j=\emptyset$, for all $i,j\in I$ with $i\neq j$. The \emph{support} of $x$ is defined by   $\mathrm{supp}(x):=\bigcup_{i\in I}D_i$.
 If  $\mathrm{supp}(x)=\rd$, we say that $x$ is a \emph{tiling} of $\rd$. When a patch has a single tile $D$, we identify this patch with the corresponding tile. Given a patch $x=\{D_i\}_{i\in I}$ and $\vec{t}\in\mathbb{R}^d$, $\vec{t}+x:=\{\vec{t}+D_i\}_{i\in I}$ is another patch. In particular, if $x$ is a tiling of $\rd$, $\vec{t}+x$ is another tiling of $\rd$. Hence we have an  action of $\mathbb{R}^d$ on the space of all tilings of $\rd$ by translations, which we denote by $T$.
  Two patches $x$ and $x'$ are said to be \emph{equivalent} if $x'=\vec{t}+x$ for some $\vec{t}\in\mathbb{R}^d$. We denote by $[x]$ the equivalence class of $x$.

 Let $X$ be a space of tilings of $\rd$ invariant by $T$
and  $\mathcal{P}^N(X)$ be the set of all patches $x'=\{D_i\}_{i\in I}$ such that $|I|=N$ and $x'\subset x$ for some $x\in  X$.  We denote by $\mathcal{T}^N(X)$ the set of equivalence classes with representatives in $\mathcal{P}^N(X)$. These representatives  are called $N$-\emph{protopatches} of $X$. The $1$-protopatches are more usually called \emph{prototiles}. The tiling space $X$ has \emph{finite local complexity} if $\mathcal{T}^2(X)$ is finite. Equivalently, $\mathcal{T}^N(X)$ is finite for each $N$.

 If $K\subset \mathbb{R}^d$ is compact and $x\in X$, we denote by $x[[K]]$ the set of all patches $x'\subset x$ with bounded support satisfying $K\subseteq \mathrm{supp}( x')$.
  For $x,y\in X$, we set
   \begin{align*}
  \nonumber  d_T(x,y)=\inf\Big\{\{\sqrt{2}/2\}\cup&\{0<r<\sqrt{2}/2: \,\textrm{exist $x'\in x[[B_{1/r}]]$,  $\,y'\in y[[B_{1/r}]],$}\\ &\textrm{and $\vec{t}\in \mathbb{R}^d$ with $\|\vec{t}\|\leq r$ and $\vec{t}+x'=y'$}\}\Big\}.\label{distance}
  \end{align*}

  \begin{thm}\cite{Rob,Sol}
   $(X,d_T)$ is a  complete metric space. Moreover, if $X$ has finite local complexity, then $(X,d_T)$ is compact and the action  $T$ is continuous.
  \end{thm}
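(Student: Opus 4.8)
The plan is to verify the metric axioms, then to prove completeness directly from the Cauchy condition, and finally to derive compactness and continuity of $T$ from finite local complexity. As for the metric axioms: symmetry is built into the definition, since $\vec t+x'=y'$ is equivalent to $(-\vec t)+y'=x'$ with $\|-\vec t\|=\|\vec t\|$, and $d_T(x,x)=0$ trivially. If $d_T(x,y)=0$ then for every $r>0$ the tilings $x$ and $y$ contain finite patches covering $B_{1/r}$ that differ by a translation of norm $\le r$; since a bounded tile has no nonzero period and overlapping subpatches of $y$ must coincide, these translations equal $0$ for $r$ small, so $x$ and $y$ agree on every ball about the origin and hence $x=y$. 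The one delicate axiom is the triangle inequality: given $r_1>d_T(x,y)$ and $r_2>d_T(y,z)$ with $r_1+r_2<\sqrt 2/2$, one takes the witnessing patches $y_1,y_2\subseteq y$, restricts them to the finite patch $y_0$ of tiles of $y$ meeting $B_{1/(r_1+r_2)}$, and transports $y_0$ back into $x$ and forward into $z$; the two transported patches are related by the composed translation $\vec s+\vec t$ of norm $\le r_1+r_2$, and the standard bookkeeping with the radii (carried out in \cite{Rob,Sol}) yields $d_T(x,z)\le d_T(x,y)+d_T(y,z)$.

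\emph{Completeness.} Let $(x_n)$ be $d_T$-Cauchy. For each $R$ there is $N$ such that for $m,n\ge N$ the tilings $x_m,x_n$ share, up to a translation of norm $<1/R$, a finite patch covering $B_R$. Passing to a subsequence with $d_T(x_{n_j},x_{n_{j+1}})<2^{-j}$, the finite patches of the $x_{n_j}$ covering larger and larger balls stabilise up to summable translations; correcting by the convergent partial sums of those translations and taking the union over an exhaustion $B_1\subseteq B_2\subseteq\cdots$ of $\rd$, one obtains a patch with support all of $\rd$, i.e. a tiling $x$. One then checks from the definition that $d_T(x_n,x)\to 0$; since the ambient space of all tilings of $\rd$ is complete in this way and the tiling spaces considered are closed in it, $x\in X$ and $(X,d_T)$ is complete. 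This is the main technical obstacle of the proof: the hard part is choosing the corrections so that the patches glued over the exhaustion remain mutually compatible and the limit really is a tiling.

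\emph{Compactness and continuity of $T$ under finite local complexity.} Suppose $\mathcal T^2(X)$, hence every $\mathcal T^N(X)$, is finite; then only finitely many prototile shapes occur, so tiles have inradius bounded below and diameter bounded above, and consequently any patch covering a ball $B_R$ has a bounded number of tiles, all with support in a fixed larger ball. Given a sequence $(x_n)$ in $X$, the patch $q_n^{(m)}$ of tiles of $x_n$ meeting $B_m$ thus ranges over a precompact family for each $m$, so a diagonal argument produces a subsequence along which $q_n^{(m)}$ converges, for every $m$, to a patch $\hat q^{(m)}$; the $\hat q^{(m)}$ are nested with union a tiling $x\in X$, and $d_T(x_{n_k},x)\to 0$. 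Hence $(X,d_T)$ is sequentially compact, so compact. For continuity of the translation action, first note the orbit estimate $d_T(\vec v+x,x)\le\|\vec v\|$ for small $\|\vec v\|$: a finite patch of $x$ covering $B_{1/\|\vec v\|+\|\vec v\|}$, which exists by finite local complexity, has a $\vec v$-translate inside $\vec v+x$ covering $B_{1/\|\vec v\|}$ and differing from the original patch by the translation $-\vec v$. Combined with a near-invariance bound controlling $d_T(\vec w+x,\vec w+y)$ by $d_T(x,y)$ and $\|\vec w\|$ (and tending to $0$ with $d_T(x,y)$), and with the group law $T_{\vec v}\circ T_{\vec w}=T_{\vec v+\vec w}$, this yields joint continuity of $(\vec v,x)\mapsto\vec v+x$ on $\rd\times X$. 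Finite local complexity is precisely what powers the precompactness in the diagonal argument and guarantees the finite patches used in the orbit estimate exist.
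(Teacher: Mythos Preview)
The paper does not prove this theorem: it is stated with citations to \cite{Rob,Sol} and no proof is given. There is therefore nothing in the paper to compare your proposal against. Your sketch follows the standard route found in those references (verify the metric axioms, prove completeness by a Cauchy--subsequence gluing, obtain sequential compactness via a diagonal argument using finite local complexity, and check continuity of translations from an orbit estimate), so in that sense it is consistent with what the cited sources do.

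One small remark on content: in your argument that $d_T(x,y)=0$ implies $x=y$, the claim that ``these translations equal $0$ for $r$ small'' is not quite the way it goes. The witnessing translations $\vec t_r$ need not be exactly zero for any positive $r$; rather, one fixes a tile $D\in x$ near the origin, notes that $D+\vec t_r$ must be a tile of $y$ for all small $r$, and uses that only finitely many tiles of $y$ meet a neighbourhood of $D$ to conclude that $D+\vec t_r$ is eventually a fixed tile of $y$, forcing $\vec t_r\to 0$ to hit that fixed tile, hence $D\in y$. This is a minor polishing issue, not a gap.
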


   From now on we assume that $X$ is equipped with the metric $d_T$ and that $X$  has finite local complexity.

\begin{rem}
  The above equivalence relation between patches and the corresponding definition of distance could be defined with respect to rather general ``actions" of groups on patches (see \cite{PV}).  However, the metric $d_T$ is adequate to   the purposes of this paper since we shall only be concerned with the dynamics associated to the action $T$.
\end{rem}

  A \emph{(self-similar) substitution} is a map $S:\mathcal{P}^1(X)\to\mathcal{P}(X):=\bigcup_N\mathcal{P}^N(X)$ such that:
\begin{itemize}
    \item[(S$_1$)] there is $\lambda>1$ (the \emph{dilatation factor} of $S$) such that
  $\mathrm{supp}(S(P))=\lambda \mathrm{supp}(P)$ for all $P\in\mathcal{P}^1(X)$;
  \item[(S$_2$)] if $P=\vec{t}+Q$ then $S(P)=\lambda \vec{t}+S(Q)$.
  \end{itemize}

Take a finite number of (non-equivalent) prototiles $\{D_1,\ldots,D_l\}$ of $X$ such that  $$\mathcal{T}^1(X)=\{[D_1],\ldots, [D_l]\}.$$ The \emph{structure matrix} $A_S$ associated to the substitution $S$ is the $l\times l$ matrix with entries $A_{ij}$ equal to the number of tiles equivalent to $D_i$ that appear in $S(D_j)$. If $A_S^m>0$ for some $m>0$, $S$ is said to be \emph{primitive}. In the particular case $m=1$, $S$ is \emph{strongly primitive}.

  Given a patch $x=\{D_i\}_{i \in I}$ with $D_i\in \mathcal{P}^1(X)$, we define the patch $S(x):=\bigcup_{i\in I} S(D_i)$. Assume that the substitution $S$ can be extended to maps $S: \mathcal{P}(X)\to \mathcal{P}(X)$ and $S:X\to X$.  In this case, take a tile $D\in \mathcal{P}^1(X)$ and  define inductively the following sequence of patches in $\mathcal{P}(X)$: $x_1=D$, and $x_k=S(x_{k-1})$ for $k>1$. Consider the closed (hence compact) tiling space $X_S\subseteq X$, commonly known as  \emph{substitution tiling space} associated to $S$, defined by: a tiling $x\in X$ belongs to $X_S$ if, and only if, for any finite patch $x'\subset x$ there exist $k>0$ and a vector $\vec{t}\in \rd$ such that $\vec{t}+x'\subseteq x_k$. We have:

  \begin{prop}\cite{Rob,Sol} Suppose that $S$ is primitive. Then $X_S\neq \emptyset$, $S(X_S)\subseteq X_S$ and $X_S$ is independent of the initial tile $D\in\mathcal{P}^1(X)$.
   \end{prop}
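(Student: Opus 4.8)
The plan is to establish the three assertions in the order: independence of the initial tile, then non-emptiness, and finally $S(X_S)\subseteq X_S$. Taking them in this order is convenient because, once independence is known, I may build an element of $X_S$ starting from whichever prototile is most comfortable.

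\emph{Independence of the initial tile.} For a prototile $D_i$ set $x_1^{(i)}=D_i$, $x_k^{(i)}=S^{k-1}(D_i)$, and let $X_S^{(i)}$ be the associated tiling space. The condition defining membership in $X_S^{(i)}$ refers only to which equivalence classes of finite patches occur inside the patches $x_k^{(i)}$, so it suffices to show that this set of classes does not depend on $i$. Suppose a finite patch $w$ satisfies $\vec t+w\subseteq x_k^{(i)}$ for some $\vec t$. Since $S$ is primitive, $A_S^m>0$ for some $m$ (indeed for all large $m$, since $A_S$ has no zero column), so $S^m(D_j)$ has, as one of its tiles, a translate $\vec u+D_i$ of $D_i$; applying $S^{k-1}$ and using (S$_2$) yields $\lambda^{k-1}\vec u+x_k^{(i)}\subseteq S^{k-1}(S^m(D_j))=x_{k+m}^{(j)}$, whence $\lambda^{k-1}\vec u+\vec t+w\subseteq x_{k+m}^{(j)}$. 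By symmetry the two collections of classes coincide, so $X_S^{(i)}=X_S^{(j)}=:X_S$.

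\emph{Non-emptiness.} I would use primitivity to choose $m$ large and a prototile $D_i$ so that $S^m(D_i)$ contains a translate $\vec u+D_i$ of $D_i$ lying at distance $\geq\rho>0$ from $\partial\,\mathrm{supp}(S^m(D_i))=\partial(\lambda^m\mathrm{supp}(D_i))$; such a deeply embedded self-occurrence exists because for large $m$ the dilated support is large, only a negligible fraction of its tiles is within a fixed distance of the boundary, and one further application of $S^m$ turns an interior tile of any type into an interior copy of $S^m(\cdot)$, which by primitivity contains a translate of $D_i$. Setting $\vec v=-\vec u/(\lambda^m-1)$ and $P=\vec v+D_i$ one gets $\lambda^m\vec v+\vec u=\vec v$, so by (S$_2$) the tile $P$ is one of the tiles of the patch $S^m(P)$, and $\mathrm{supp}(P)$ lies at distance $\geq\rho$ from $\partial\,\mathrm{supp}(S^m(P))$. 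Iterating, $S^{km}(P)\subseteq S^{(k+1)m}(P)$ with the boundary buffers growing like $\lambda^{km}\rho$, so $y:=\bigcup_{k\geq0}S^{km}(P)$ has $\mathrm{supp}(y)=\rd$, i.e.\ $y$ is a tiling (it belongs to $X$, as part of the usual set-up), and $S^m(y)=y$. Every finite patch of $y$ sits inside some $S^{km}(P)$, a translate of $x_{km+1}^{(i)}$; by the previous paragraph it therefore occurs, up to translation, inside some $x_l$, so $y\in X_S$ and $X_S\neq\emptyset$. I expect the extraction of the deeply interior self-occurrence and the verification that the nested supports exhaust $\rd$ to be the only genuinely geometric, and hence most delicate, step.

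\emph{The inclusion $S(X_S)\subseteq X_S$.} This is essentially formal. Let $x\in X_S$; then $\mathrm{supp}(S(x))=\lambda\,\mathrm{supp}(x)=\rd$ by (S$_1$), so $S(x)$ is a tiling, and $S(x)\in X$ by hypothesis. Given a finite patch $w\subseteq S(x)$, each tile of $w$ lies in $S(D)$ for some tile $D$ of $x$; choosing one such $D$ per tile of $w$ produces a finite sub-patch $x'\subseteq x$ with $w\subseteq S(x')$. Pick $k,\vec t$ with $\vec t+x'\subseteq x_k$; applying $S$ and using (S$_2$) gives $\lambda\vec t+S(x')\subseteq S(x_k)=x_{k+1}$, hence $\lambda\vec t+w\subseteq x_{k+1}$. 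Thus every finite patch of $S(x)$ occurs in some $x_{k+1}$, i.e.\ $S(x)\in X_S$, which completes the plan.
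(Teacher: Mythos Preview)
The paper does not supply its own proof of this proposition: it is simply quoted from \cite{Rob,Sol}. Your argument is correct and is essentially the classical one found in those references --- independence by primitivity, non-emptiness via a fixed tiling of a suitable power $S^m$, and $S$-invariance by the formal computation you give.

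It is worth comparing your non-emptiness step with how the paper handles the analogous claim for multi-substitution spaces in Proposition~\ref{Xa}. There, instead of locating a self-occurrence and producing a tiling $y$ with $S^m(y)=y$, the authors appeal to Lemma~\ref{super} to build a nested chain of translated super-patches $x'_n=\vec t_n+S_{\bar a}^{k_n}(D)$ whose supports contain balls of radii tending to infinity, and take the union. Your fixed-point construction is the cleaner one in the single-substitution setting and yields the bonus that $y$ is $S^m$-periodic; the nested-patch method, however, transfers unchanged to the multi-substitution situation (where no single map is being iterated), which is why the paper adopts it there.
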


The Perron-Frobenius (PF) theorem for non-negative matrices is a crucial tool for the study of  substitution tiling spaces:
\begin{thm}\cite{Rue}
  Let $A\geq 0$ be a real square matrix with $A^m>0$ for some $m>0$. Then there is a simple positive eigenvalue $\omega >0$ of $A$ with $\omega>|\omega'|$ for all other eigenvalues $\omega'$. Moreover, there exist  eigenvectors  $\vec p$ and $\vec q$   corresponding to $\omega$ for $A$ and $A^T$, respectively, such that $\vec p\cdot \vec q=1$ and $\vec p,\vec q >0$. In this case,  for any $\vec v\in \rd$
  $$\lim_n \frac{A^n\vec v}{\omega^n}=(\vec q\cdot \vec v)\vec p.$$
 \end{thm}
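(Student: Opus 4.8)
This is the classical Perron--Frobenius theorem, and I would organise a self-contained proof as follows.

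\emph{Reduction to a strictly positive matrix.} I would first prove everything for $B=A^m>0$ and then transfer it to $A$. Since $A$ commutes with $B$, once the top eigenspace of $B$ is shown to be one-dimensional it is $A$-invariant, so $A$ acts on it as multiplication by a scalar $\nu$, which is positive because $A\ge 0$ and the spanning eigenvector is strictly positive, and $\nu^m=\omega_B$. If $\lambda\in\C$ is an eigenvalue of $A$ with $|\lambda|\ge\nu$, then $\lambda^m$ is an eigenvalue of $B$ of modulus $\ge\omega_B$; by the strictly positive case this forces $\lambda^m=\omega_B$ and puts the generalized $\lambda$-eigenspace of $A$ inside the one-dimensional generalized $\omega_B$-eigenspace of $B$, whence $\lambda=\nu$ and it is algebraically simple. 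Applying the same reasoning to $A^T$ and $B^T$ produces the dual eigenvector. So it suffices to treat $B>0$.

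\emph{Existence of the Perron eigenpair.} For $\vec x\ge 0$ with $\vec x\ne 0$ set $r(\vec x)=\sup\{\lambda\ge 0:\ B\vec x\ge\lambda\vec x\}$; this is scale invariant, so $\omega:=\sup r(\vec x)$ over nonzero $\vec x\ge 0$ equals the supremum over the compact standard simplex, and since $r$ is upper semicontinuous there it is attained at some $\vec p$. Then $B\vec p\ge\omega\vec p$, and if $\vec z:=B\vec p-\omega\vec p$ were nonzero then $B\vec z>0$, hence $B(B\vec p)>\omega B\vec p$ in every coordinate, giving $r(B\vec p)>\omega$, a contradiction; so $B\vec p=\omega\vec p$, $\vec p=\omega^{-1}B\vec p>0$ and $\omega>0$. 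The same argument for $B^T$ gives $\vec q>0$ with $B^T\vec q=\omega\vec q$, and $\vec q\cdot\vec p>0$ lets us normalize $\vec p\cdot\vec q=1$.

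\emph{Simplicity, strict dominance, and the limit.} Let $\mu\in\C$ be an eigenvalue with eigenvector $\vec y\ne 0$. From $|\mu|\,|\vec y|=|B\vec y|\le B|\vec y|$ we get $r(|\vec y|)\ge|\mu|$, so $|\mu|\le\omega$ (hence $\omega$ is the spectral radius); and if $|\mu|=\omega$ the maximality argument above forces $B|\vec y|=\omega|\vec y|$, while equality in $|B\vec y|=B|\vec y|$ with $B>0$ forces all nonzero entries of $\vec y$ to share a single phase, so $\vec y':=c\vec y$ for a suitable unimodular $c$ is a strictly positive $\omega$-eigenvector and $\mu=\omega$. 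If $\vec y'$ were not proportional to $\vec p$, a suitable combination $\vec p-t\vec y'$ with $t>0$ would be a nonzero nonnegative $\omega$-eigenvector with a vanishing coordinate, contradicting $B>0$; so the $\omega$-eigenspace is exactly $\R\vec p$ and every other eigenvalue is strictly smaller in modulus. Algebraic simplicity: if $(B-\omega)\vec w=\vec p$ had a solution, then $0\ne\vec q\cdot\vec p=\vec q\cdot(B-\omega)\vec w=(B^T\vec q-\omega\vec q)\cdot\vec w=0$, absurd. Hence $B=\omega P+N$ with $P=\vec p\,\vec q^{\,T}$ the rank-one spectral projection, $PN=NP=0$, and spectral radius of $N$ strictly below $\omega$, so $B^n/\omega^n=P+(N/\omega)^n\to P$, i.e. $B^n\vec v/\omega^n\to(\vec q\cdot\vec v)\vec p$ for all $\vec v$. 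Undoing the reduction yields the same statement for $A$ with $\omega$ replaced by $\nu$.

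\emph{Where the difficulty lies.} The genuinely delicate step is upgrading ``there is a dominant nonnegative eigenvector'' to ``$\omega$ is \emph{algebraically} simple and $|\omega'|<\omega$ for every other eigenvalue $\omega'$'': only this yields the clean rank-one limit rather than mere Ces\`aro convergence, and it hinges on the phase-alignment argument and on the duality fact $\vec q\cdot\vec p\ne 0$. An alternative that also exhibits the rate of convergence is to equip the open positive cone with Hilbert's projective metric: by Birkhoff's theorem $B>0$ is a strict contraction there, its unique fixed ray is $\R\vec p$, and the contraction ratio bounds $|\omega'|/\omega$ and hence the speed in the last limit; I would mention this route as a remark.
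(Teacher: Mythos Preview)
Your argument is correct and follows the classical Collatz--Wielandt/spectral-projection route to Perron--Frobenius; the reduction from the primitive case $A^m>0$ to the strictly positive case $B=A^m>0$ via commutativity and containment of generalized eigenspaces is clean, and the phase-alignment step together with the duality pairing $\vec q\cdot\vec p\neq 0$ correctly secures algebraic simplicity and the rank-one limit.

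There is, however, nothing to compare against: the paper does not prove this theorem at all. It is stated with the citation \cite{Rue} (Ruelle, \emph{Statistical Mechanics}) and used as a black box, so your self-contained proof goes strictly beyond what the paper provides. One minor remark on the final sentence: ``undoing the reduction'' deserves a line of justification, since the limit for $B=A^m$ only directly gives convergence of $A^{mn}/\nu^{mn}$; the cleanest way to finish is to observe that $\vec p$ and $\vec q$ are already eigenvectors of $A$ and $A^T$ for $\nu$, so the same spectral decomposition $A=\nu\,\vec p\,\vec q^{\,T}+M$ with $PM=MP=0$ and $\rho(M)<\nu$ holds for $A$ itself, giving $A^n/\nu^n\to \vec p\,\vec q^{\,T}$ without passing through subsequences.
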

The eigenvalue $\omega >0$  is called the \emph{PF-eigenvalue} of $A$. The eigenvectors $\vec p >0$ and $\vec q>0$ are called the \emph{right PF-eigenvector} and \emph{left PF-eigenvector}  of $A$, respectively.

In general, if $S$ is a substitution acting on a set of prototiles $\{D_1,\ldots,D_l\}$ with Euclidian volumes $V_1,\ldots,V_l$, the vector $\vec q=(V_1,\ldots,V_l)$ is a left eigenvector of $A_S$ associated to the eigenvalue  $\lambda^d$. For primitive substitutions,  $\vec q$ and $\omega=\lambda^d$ are precisely the left PF-eigenvector  and the corresponding PF-eigenvalue of $A_S$, respectively.

A substitution $S$ is said to be \emph{recognizable} if $S:X\to S(X)$ is injective. In this case, we say that $S$ is \emph{strongly recognizable} if for any $x\in X$ and any tile $D\in \mathcal{P}^1(X)$ the following holds: if $S(D)$ is a patch of $S(x)$, then $D\in x$.

\begin{eg}
The Ammann A3 substitution (figure \ref{ammann}; see \cite{GS} for a detailed description of this substitution) is recognizable but not strongly  recognizable.
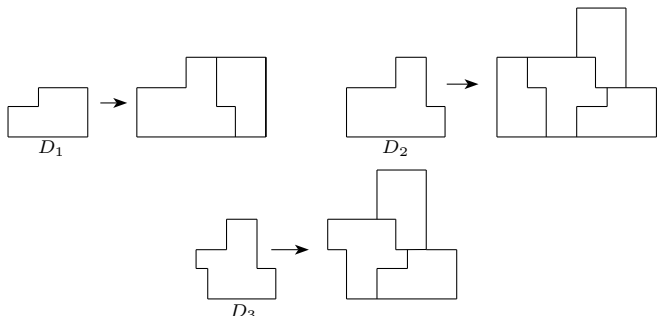
\begin{figure}[!htb]\label{ammann}
\psset{xunit=0.25cm,yunit=0.25cm,algebraic=true,dotstyle=*,dotsize=3pt 0,linewidth=0.4pt,arrowsize=3pt 2,arrowinset=0.25}
\begin{pspicture*}(-6.65,-9.5)(46.45,8)
\psline(0,0)(0,1.62)
\psline(0,1.62)(1.62,1.62)
\psline(1.62,1.62)(1.62,2.62)
\psline(1.62,2.62)(4.24,2.62)
\psline(4.24,2.62)(4.24,0)
\psline(4.24,0)(0,0)
\psline(13.71,4.24)(13.71,0)
\psline(13.71,4.24)(13.71,0)
\psline(13.71,4.24)(13.71,0)
\psline(9.47,4.24)(13.71,4.24)
\psline(9.47,2.62)(9.47,4.24)
\psline(6.85,2.62)(9.47,2.62)
\psline(6.85,0)(6.85,2.62)
\psline(18,0)(18,2.62)
\psline(18,2.62)(20.62,2.62)
\psline(20.62,2.62)(20.62,4.24)
\psline(20.62,4.24)(22.24,4.24)
\psline(22.24,4.24)(22.24,1.62)
\psline(22.24,1.62)(23.24,1.62)
\psline(23.24,1.62)(23.24,0)
\psline(23.24,0)(18,0)
\psline(26,0)(26,4.24)
\psline(26,4.24)(30.24,4.24)
\psline(30.24,4.24)(30.24,6.86)
\psline(30.24,6.86)(32.86,6.86)
\psline(32.86,6.86)(32.86,2.62)
\psline(32.86,2.62)(34.48,2.62)
\psline(34.48,2.62)(34.48,0)
\psline(34.48,0)(26,0)
\psline(27.62,4.24)(27.62,2.62)
\psline(27.62,2.62)(28.62,2.62)
\psline(28.62,2.62)(28.62,0)
\psline(30.24,4.24)(31.24,4.24)
\psline(31.24,4.24)(31.24,2.62)
\psline(31.24,2.62)(32.86,2.62)
\psline(30.24,0)(30.24,1.62)
\psline(30.24,1.62)(31.86,1.62)
\psline(31.86,1.62)(31.86,2.62)
\psline(11.09,4.24)(11.09,1.62)
\psline(11.09,1.62)(12.09,1.62)
\psline(12.09,1.62)(12.09,0)
\psline(6.85,0)(13.71,0)
\psline(13.24,-4.38)(11.62,-4.38)
\psline(11.62,-4.38)(11.62,-6)
\psline(11.62,-6)(10,-6)
\psline(10,-6)(10,-7)
\psline(10,-7)(10.62,-7)
\psline(10.62,-7)(10.62,-8.62)
\psline(10.62,-8.62)(14.24,-8.62)
\psline(14.24,-8.62)(14.24,-7)
\psline(14.24,-7)(13.24,-7)
\psline(13.24,-7)(13.24,-4.38)
\psline(17,-4.38)(19.62,-4.38)
\psline(19.62,-4.38)(19.62,-1.76)
\psline(19.62,-1.76)(22.24,-1.76)
\psline(22.24,-1.76)(22.24,-6)
\psline(22.24,-6)(23.86,-6)
\psline(23.86,-6)(23.86,-8.62)
\psline(23.86,-8.62)(18,-8.62)
\psline(18,-8.62)(18,-6)
\psline(18,-6)(17,-6)
\psline(17,-6)(17,-4.38)
\psline(19.62,-8.62)(19.62,-7)
\psline(19.62,-7)(21.24,-7)
\psline(21.24,-7)(21.24,-6)
\psline(20.62,-6)(22.24,-5.99)
\psline(20.62,-4.38)(20.62,-6)
\psline(19.62,-4.38)(20.62,-4.38)
\psline{->}(4.91,1.81)(6.34,1.81)
\psline{->}(23.33,2.81)(25.02,2.81)
\psline{->}(14,-6)(16,-6)
\rput[tl](1.58,-0.19){\tiny{$D_1$}}
\rput[tl](19.88,-.19){\tiny{$D_2$}}
\rput[tl](11.79,-8.9){\tiny{$D_3$}}
\end{pspicture*}
\caption{Ammann A3 substitution.}
\end{figure}
More generally, any substitution for which one patch $S(D_i)$ contains another patch $S(D_j)$ can not be strongly recognizable.
The pentiamond substitution (figure 2; see the Tilings Encyclopedia at http://tilings.math.uni-bielefeld.de) is  strongly recognizable.
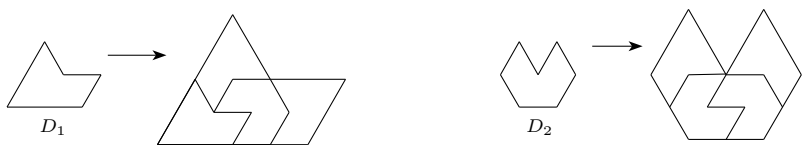
\begin{figure}[!htb]
\label{pentiamond}
\psset{xunit=1.0cm,yunit=1.0cm,algebraic=true,dotstyle=*,dotsize=3pt 0,linewidth=0.4pt,arrowsize=3pt 2,arrowinset=0.25}
\begin{pspicture*}(-1.72,0)(11.86,2)
\psline(2,0)(4,0)
\psline(2,0)(3,1.73)
\psline(4.5,0.87)(4,0)
\psline(3.5,0.87)(4.5,0.87)
\psline(3.5,0.87)(3,1.73)
\psline(2,0)(2.5,0.87)
\psline(2.75,0.43)(2.5,0.87)
\psline(2.75,0.43)(3.25,0.43)
\psline(3.25,0.43)(3,0)
\psline(3,0)(2,0)
\psline(2.75,0.43)(3,0.87)
\psline(3,0.87)(3.5,0.87)
\psline(3.5,0)(3.75,0.43)
\psline(3.5,0.87)(3.75,0.43)
\psline(3,0)(3.5,0)
\psline(0,0.5)(0.5,1.37)
\psline(0.75,0.93)(0.5,1.37)
\psline(0.75,0.93)(1.25,0.93)
\psline(1.25,0.93)(1,0.5)
\psline(1,0.5)(0,0.5)
\rput[tl](0.43,0.33){\tiny{$D_1$}}
\rput[tl](6.9,0.33){\tiny{$D_2$}}
\psline{->}(1.34,1.19)(2.12,1.19)
\psline(6.56,0.93)(6.81,1.37)
\psline(6.81,1.37)(7.06,0.93)
\psline(7.06,0.93)(7.31,1.37)
\psline(7.31,1.37)(7.56,0.93)
\psline(7.31,0.5)(7.56,0.93)
\psline(6.56,0.93)(6.81,0.5)
\psline(6.81,0.5)(7.31,0.5)
\psline(8.56,0.93)(9.06,1.8)
\psline(9.06,1.8)(9.56,0.93)
\psline(10.06,1.8)(9.56,0.93)
\psline(10.06,1.8)(10.56,0.93)
\psline(10.06,0.07)(10.56,0.93)
\psline(10.06,0.93)(10.31,0.5)
\psline(9.56,0.93)(10.06,0.93)
\psline(9.56,0.93)(9.31,0.5)
\psline(9.31,0.5)(9.81,0.5)
\psline(9.81,0.5)(9.56,0.07)
\psline(9.06,0.07)(10.06,0.07)
\psline(8.56,0.93)(9.06,0.07)
\psline(9.06,0.93)(8.81,0.5)
\psline(9.06,0.93)(9.56,0.94)
\psline{->}(7.78,1.31)(8.45,1.31)
\end{pspicture*}
\caption{Pentiamond substitution.}
\end{figure}
\end{eg}

 \section{Multi-Substitution Tiling Spaces}

In this section we establish the definition and basic properties of multi-substitution tiling spaces. These spaces are also referred to as \emph{mixed substitution} tiling spaces \cite{GM} or \emph{$S$-adic systems \cite{Du,Fe}}.  In \cite{FS}, the authors developed a framework   for studying the ergodic theory and topology
of hierarchical tilings in great generality. The classical substitution tiling spaces and the multi-substitution tiling spaces fit in this general framework. In fact, they are particular cases of \emph{fusion} tiling spaces. Certain properties, like minimality or
unique ergodicity, can be derived within the  framework of fusion tiling spaces. However, naturally, some other properties become hard to achieve in such generality.

\vspace{.20in}

 Let $\mathcal{S}=\{S_i\}_{i\in J}$ be a finite collection of substitutions $S_i:\mathcal{P}^1(X)\to\mathcal{P}(X)$.  Assume that the substitution $S_i$ can be extended to maps $S_i: \mathcal{P}(X)\to \mathcal{P}(X)$ and $S_i:X\to X$, for each $i\in J$.  Denote by $\lambda_i>1$ and $A_i$  the dilatation factor and the structure matrix, respectively, associated to $S_i$.   Provide the space of sequences $$\Sigma:=\{\bar a=(a_1,a_2,\ldots): {a_i}\in J\}$$ with the usual structure $(\Sigma,d_\Sigma)$ of metric space: given $\bar a=(a_1,a_2,\ldots)$ and $\bar b=(b_1,b_2,\ldots)$ in $\Sigma$, we set $d_\Sigma(\bar a,\bar b)=1/L$ if $L$ is the smallest integer such that $a_L\neq b_L$. We also introduce the standard shift map $\sigma:\Sigma\to\Sigma$, given by $\sigma(a_1,a_2,\ldots)=(a_2,a_3,\ldots)$, which is continuous with respect to $d_\Sigma$. Given $\bar a =(a_1,a_2,\ldots)\in\Sigma$, we denote by $[\bar a]_n$ the periodic sequence $(a_1,...,a_n,a_1,...,a_n,\ldots)\in\Sigma$. Clearly, for each $k>0$, $S_{\bar{a}}^k:=S_{a_1}\circ S_{a_2}\circ \ldots \circ S_{a_k}$ is itself a substitution with structure matrix given by  $A_{\bar{a}}^k:=A_{a_1}A_{a_2} \ldots A_{a_k}$ and dilation factor $\lambda_{\bar a}^n:=\lambda_{a_1}\lambda_{a_2}\ldots \lambda_{a_n}$.

 The sequence of substitutions $(S_{a_n})$ is called \emph{primitive} if for each $n$ there exists a least $N^{\bar a}_n$ such that $A_{a_{n}}A_{a_{n+1}}\ldots A_{a_{n+N^{\bar a}_n}}>0$. Observe that, in this case, each matrix $A_i$ does not have any column of all zeroes. Hence,
 $A_{a_{n}}A_{a_{n+1}}\ldots A_{a_{n+N^{\bar a}_n+j}}>0$ for all $j\geq 0$.
  If, for each $n$, the substitution $S_{a_n}$ is strongly primitive, that is, $A_{a_{n}}>0$, the sequence $(S_{a_n})$ is called \emph{strongly primitive}. The set $\mathcal{S}$ of substitutions is \emph{primitive} if, for any $\bar a\in \Sigma$, $(S_{a_n})$ is primitive. Moreover, we say that   a primitive set of substitutions $\mathcal{S}$ is \emph{bounded primitive} if the set $\{N^{\bar a}_n:\, n\in \mathbb{N}, \bar a\in \Sigma\}$ is bounded.
  \begin{lem}
    If $\mathcal{S}$ is primitive then it is bounded primitive.
  \end{lem}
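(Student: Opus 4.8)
The plan is to prove the statement by a compactness argument on $(\Sigma,d_\Sigma)$, after first reducing the doubly-indexed family $\{N^{\bar a}_n\}$ to a singly-indexed one. The key preliminary observation is that the defining condition $A_{a_{n}}A_{a_{n+1}}\cdots A_{a_{n+N}}>0$ depends only on the finite word $a_n a_{n+1}\cdots a_{n+N}$; hence, applying the shift, $N^{\bar a}_n=N^{\sigma^{n-1}(\bar a)}_1$ for every $n$ and every $\bar a$. Since $\sigma^{n-1}(\bar a)$ again ranges over all of $\Sigma$, the set $\{N^{\bar a}_n:n\in\mathbb N,\ \bar a\in\Sigma\}$ coincides with $\{N^{\bar b}_1:\bar b\in\Sigma\}$, so it suffices to bound this last family.

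Assume, for contradiction, that it is unbounded, and choose $\bar b^{(j)}\in\Sigma$ with $N^{\bar b^{(j)}}_1\to\infty$. The space $(\Sigma,d_\Sigma)$ is compact (it is a countable product of the finite discrete space $J$), so after passing to a subsequence we may assume $\bar b^{(j)}\to\bar c$ for some $\bar c\in\Sigma$. Primitivity of $\mathcal S$ gives that $(S_{c_n})$ is primitive, so $N:=N^{\bar c}_1$ is a well-defined finite integer with $A_{c_1}A_{c_2}\cdots A_{c_{1+N}}>0$.

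I would then exploit that convergence in $d_\Sigma$ is coordinatewise stabilization: for all $j$ sufficiently large one has $b^{(j)}_i=c_i$ for every $i\le N+1$, and therefore $A_{b^{(j)}_1}A_{b^{(j)}_2}\cdots A_{b^{(j)}_{1+N}}=A_{c_1}A_{c_2}\cdots A_{c_{1+N}}>0$. By the minimality built into the definition of $N^{\bar b^{(j)}}_1$, this forces $N^{\bar b^{(j)}}_1\le N$ for all such $j$, contradicting $N^{\bar b^{(j)}}_1\to\infty$. Hence $\{N^{\bar b}_1:\bar b\in\Sigma\}$ is bounded, and with the reduction above $\mathcal S$ is bounded primitive.

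I do not expect a genuine obstacle here: the argument is a standard ``locally constant function on a compact space is bounded'' argument in disguise. The two points deserving care are the reduction via the shift --- without it one is tempted to fix $n$ and only vary $\bar a$, which does not by itself yield uniformity in $n$ --- and the index bookkeeping, namely that agreement of $\bar b^{(j)}$ and $\bar c$ on the first $N+1$ coordinates is exactly what is needed to make the two products of $N+1$ structure matrices identical.
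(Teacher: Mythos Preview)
Your proof is correct, but it takes a different route from the paper's. You argue topologically: after the shift reduction you exploit compactness of $\Sigma$ and the fact that $\bar b\mapsto N^{\bar b}_1$ is upper semicontinuous (its value at $\bar b$ depends only on a finite initial segment), so a supremum equal to $+\infty$ would be witnessed along a convergent sequence and then contradicted at the limit. The paper instead gives a finitary pigeonhole argument: there are only finitely many possible ``zero/nonzero patterns'' among products of structure matrices, so if some product $A_{\bar a}^q$ with $q$ larger than this count still has a zero entry, two prefixes $A_{\bar a}^{p'}$ and $A_{\bar a}^{p}$ share the same pattern; repeating the block $a_{p'+1}\cdots a_p$ indefinitely then yields an explicit sequence $\bar b$ for which $A_{\bar b}^N$ is never strictly positive, contradicting primitivity of $\mathcal S$. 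Your approach is cleaner and avoids any combinatorics on matrix supports; the paper's approach is constructive, producing an actual non-primitive sequence and giving an explicit bound (essentially $L(\mathcal S)$, the number of zero configurations) rather than a mere existence statement.
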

  \begin{proof}
     The number of possible configurations of zero entries in finite products of structure matrices associated to substitutions in $\mathcal{S}$ is finite, say $L(\mathcal{S}).$ Now, assume that
      $\mathcal{S}$ is not bounded primitive. Then, for some $q>L(\mathcal{S})$, there is $\bar a\in \Sigma$ such that $A_{\bar a}^q$ has some zero entry. This means that we can find $p'<p\leq q$ such that  $A_{\bar a}^{p'}$ has the same zero configuration of entries as   $A_{\bar a}^p$. Then the sequence of substitutions $(S_{b_n})$, with $\bar b=(a_1,\ldots, a_{p'},a_{p'+1},\ldots,a_{p},a_{p'+1},\ldots,a_{p},a_{p'+1},\ldots)$ is non-primitive.
  \end{proof}

Before introduce the multi-substitution tiling spaces, let us prove the following useful lemma.

\begin{lem}\label{super}
Take  $D\in \mathcal{P}^1(X)$ and  assume that the sequence of substitutions $(S_{a_n})$ is primitive.  Given $n>0$, there are $R>0$ and $N_0>n$ such that, for any $N>N_0$ and any ball $B$ of radius $R$ contained in the support of $S^N_{\bar{a}}(D)$, the following holds: $\vec t + S^n_{\bar{a}}(D)\subset S^N_{\bar{a}}(D)$ and $\mathrm{supp}(\vec t + S^n_{\bar{a}}(D))\subset B$, for some $\vec t \in \rd$.
\end{lem}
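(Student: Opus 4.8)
The plan is to write $S_{\bar a}^N(D)$, for $N$ large, as a union of ``super-tiles'' of uniformly bounded diameter, each of which already contains a translated copy of $S_{\bar a}^n(D)$ sitting inside it; then any ball of radius equal to that diameter bound that is contained in $\mathrm{supp}(S_{\bar a}^N(D))$ is forced to contain a whole super-tile, hence a copy of $S_{\bar a}^n(D)$ with support inside the ball.

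First I would fix the scales. Put $\bar b:=\sigma^n\bar a$, let $M$ be the least integer with $A_{a_{n+1}}A_{a_{n+2}}\cdots A_{a_{n+1+M}}>0$ (finite because $(S_{a_n})$ is primitive; in the notation above $M=N^{\bar a}_{n+1}$), and let $\delta:=\max_{1\le j\le l}\mathrm{diam}(\mathrm{supp}(D_j))$. Set $N_0:=n+M+1$ (so $N_0>n$) and $R:=\lambda_{\bar a}^{n+M+1}\delta$; since $n$, hence $M$, is fixed, $R$ is a constant. For $N>N_0$ write $\bar c:=\sigma^{n+M+1}\bar a$ and, using $S_{\bar a}^{n+M+1}=S_{\bar a}^n\circ S_{\bar b}^{M+1}$ and the composition law for iterated substitutions, $S_{\bar a}^N(D)=S_{\bar a}^{n+M+1}\bigl(S_{\bar c}^{\,N-n-M-1}(D)\bigr)$. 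The inner patch $y:=S_{\bar c}^{\,N-n-M-1}(D)$ is a finite collection of tiles $Q=\vec u+D_i$ (each equivalent to a prototile, since patches of $X$ are assembled from the $D_j$), and $S_{\bar a}^N(D)=\bigcup_{Q\in y}S_{\bar a}^{n+M+1}(Q)$, so the supports of the patches $S_{\bar a}^{n+M+1}(Q)$ cover $\mathrm{supp}(S_{\bar a}^N(D))$.

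Next I would check the two properties of a typical piece $S_{\bar a}^{n+M+1}(Q)$, $Q=\vec u+D_i$. By (S$_1$) its support has diameter $\lambda_{\bar a}^{n+M+1}\mathrm{diam}(\mathrm{supp}(D_i))\le R$. For the second property, factor again: by (S$_2$), $S_{\bar a}^{n+M+1}(Q)=\lambda_{\bar a}^{n+M+1}\vec u+S_{\bar a}^n\bigl(S_{\bar b}^{M+1}(D_i)\bigr)$; the structure matrix of $S_{\bar b}^{M+1}$ is $A_{a_{n+1}}\cdots A_{a_{n+1+M}}>0$, so the patch $S_{\bar b}^{M+1}(D_i)$ contains a tile equivalent to $D$, say $\vec w+D$; applying $S_{\bar a}^n$ — which is monotone for sub-patch inclusion, being defined tilewise by $S(x)=\bigcup_{P\in x}S(P)$ — and (S$_2$) again, $\lambda_{\bar a}^n\vec w+S_{\bar a}^n(D)\subset S_{\bar a}^n\bigl(S_{\bar b}^{M+1}(D_i)\bigr)$. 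Hence each $S_{\bar a}^{n+M+1}(Q)$ contains a translated copy $\vec t_Q+S_{\bar a}^n(D)$ of $S_{\bar a}^n(D)$ with $\mathrm{supp}(\vec t_Q+S_{\bar a}^n(D))\subset\mathrm{supp}(S_{\bar a}^{n+M+1}(Q))$.

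Finally, the covering step: given a ball $B$ of radius $R$ with $B\subset\mathrm{supp}(S_{\bar a}^N(D))$ and centre $\vec z$, pick $Q_0\in y$ with $\vec z\in\mathrm{supp}(S_{\bar a}^{n+M+1}(Q_0))$; since that set has diameter $\le R$ it lies in the closed ball of radius $R$ about $\vec z$, that is, in $B$, whence $\vec t_{Q_0}+S_{\bar a}^n(D)\subset S_{\bar a}^{n+M+1}(Q_0)\subset S_{\bar a}^N(D)$ has support inside $B$, and $\vec t:=\vec t_{Q_0}$ works. I do not expect a real obstacle: the only substantive ingredient is primitivity, used once to make $M$ (hence $R$) finite, plus the trivial metric fact in the last step. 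The single thing to be careful about is the bookkeeping of the composition and equivariance identities $S_{\bar a}^{n+M+1}=S_{\bar a}^n\circ S_{\sigma^n\bar a}^{M+1}$ and $S_{\bar a}^k(\vec t+P)=\lambda_{\bar a}^k\vec t+S_{\bar a}^k(P)$, together with the monotonicity of substitutions for inclusion — all immediate from (S$_1$), (S$_2$) and the definitions given above.
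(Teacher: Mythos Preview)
Your proof is correct and follows essentially the same approach as the paper's: use primitivity to find a level $N_0$ at which each super-patch $S_{\bar a}^{N_0}(D_i)$ contains a translated copy of $S_{\bar a}^n(D)$, then for $N>N_0$ decompose $S_{\bar a}^N(D)$ into such super-patches and take $R$ to bound their diameters so that any ball of radius $R$ inside the support swallows one whole. The paper leaves $R$ and $N_0$ implicit (``for sufficiently large $R$\ldots''), whereas you give the explicit values $N_0=n+M+1$ and $R=\lambda_{\bar a}^{n+M+1}\delta$, but the arguments are otherwise identical.
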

\begin{proof}
 Take a finite number of  prototiles $\{D_1,\ldots,D_l\}$ of $X$ such that  $$\mathcal{T}^1(X)=\{[D_1],\ldots, [D_l]\}.$$ By primitivity, we can take ${N}_0$ such that a translated copy of $  S^n_{\bar{a}}(D)$ can be found in each $S^{{N}_0}_{\bar{a}}(D_i)$ for all $i\in\{1,\ldots,l\}$. Now, consider the tiles $\tilde{D}_i=\mathrm{supp}(S^{ N_0}_{\bar{a}}(D_i))$. For sufficiently large $R$, if $B$ is a ball with radius $R$ and $x'$ is a patch formed with translated copies of the tiles  $\tilde{D}_i$, with $B\subset \mathrm{supp}(x')$, then, for some $i\in\{1,\ldots,l\}$ and $\vec{t}_i\in \rd$, we have  $\vec{t}_i+\tilde{D}_i\subset B$ and $\vec{t}_i+\tilde{D}_i\in x'$. Take $N> N_0$ such that the support of $S^{N}_{\bar a}(D)$ contains some ball $B$ of radius $R$. Since $S^{N}_{\bar a}(D)$ is the disjoint union of translated copies of patches of the form $S^{N_0}_{\bar a}(D_i)$, the support of one of this copies must be contained in $B$, and we are done.
\end{proof}

\begin{rem}\label{remsei}
  It is clear from the proof that, given $n>0$ and tile $D$, if $\mathcal{S}$ is primitive (hence bounded primitive) we can take  $R>0$ and $N_0>n$ so that the statement of lemma \ref{super} holds for any $\bar a\in \Sigma$.
\end{rem}

Take  $D\in \mathcal{P}^1(X)$, $\bar{a}\in\Sigma$ and the corresponding sequence of patches in $\mathcal{P}(X)$: $$x_{\bar{a}}^1=D,\quad\textrm{ and }\quad x_{\bar{a}}^k=S_{\bar a}^k(D),\,\textrm{ for } k>1.$$  We define the \emph{multi-substitution tiling space}  $X_{\bar{a}}:=X_{\bar{a}}(\mathcal{S})\subseteq X$ as follows: a tiling $x\in X$ belongs to $X_{\bar{a}}$ if, and only if, for any finite patch $x'\subset x$ there exist $k>0$ and a vector $\vec{t}\in \rd$ such that $\vec{t}+x'\subseteq x_{\bar{a}}^k$.

 \begin{prop}\label{Xa} If the sequence of substitutions $(S_{a_n})$ is primitive, then:
  \begin{itemize}
    \item[a)] $X_{\bar{a}}\neq \emptyset$;
    \item[b)] $X_{\bar{a}}$ is independent of the initial tile $D\in\mathcal{P}^1(X)$;
    \item[c)] $X_{[\bar{a}]_n}$ coincides with the substitution tiling space $X_{S_{\bar a}^n}$;
    \item[d)] $X_{\bar{a}}$ is closed.
\end{itemize}
 \end{prop}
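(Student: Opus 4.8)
The plan is to mimic, step by step, the classical arguments for substitution tiling spaces (as in \cite{Rob,Sol}), using primitivity of the sequence $(S_{a_n})$ in place of primitivity of a single substitution, and exploiting lemma \ref{super} as the main technical device. I would treat the four items in the order a), b), c), d).

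For part a), I would produce an explicit point of $X_{\bar a}$. Using lemma \ref{super} (with, say, $n=1$ and the tile $D$), one finds a sequence of radii $R_k\to\infty$ and indices $N_k$ such that, for each $k$, a translate of $x_{\bar a}^{N_k}=S^{N_k}_{\bar a}(D)$ sits inside $S^{N_{k+1}}_{\bar a}(D)$, around a point that is the center of a ball of radius $R_k$ contained in the larger support. Arranging these translates to be nested (by first recentering), one obtains an increasing sequence of patches whose supports exhaust $\rd$; their union is a tiling $x$ of $\rd$, and by construction every finite patch of $x$ lies (up to translation) inside some $x_{\bar a}^k$, so $x\in X_{\bar a}$. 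Here one uses that $X$ has finite local complexity so the relevant patches live in a compact set and the construction can be pushed to the limit inside $(X,d_T)$.

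Part b) is the usual ``the hierarchy sees every prototile'' argument: by primitivity, for any two tiles $D,D'\in\mathcal P^1(X)$ there is $m$ with $A^m_{\sigma^j(\bar a)}>0$ for a suitable shift, so a translate of $D'$ appears in $S^m_{\bar a}(D)$ and vice versa; hence $x^k_{\bar a}$ built from $D$ and $\tilde x^k_{\bar a}$ built from $D'$ are eventually patches of each other after a bounded shift of levels, which forces the two resulting tiling spaces to coincide. Part c) is essentially a definitional check: $S^n_{[\bar a]_n}$ is by construction the single substitution $S^n_{\bar a}=S_{a_1}\circ\cdots\circ S_{a_n}$, its iterates are exactly the patches $x^{nj}_{[\bar a]_n}$, and intermediate levels $x^{nj+r}_{[\bar a]_n}$ are patches of $x^{n(j+1)}_{[\bar a]_n}$; matching the two ``for any finite patch there is a level containing a translate of it'' conditions gives $X_{[\bar a]_n}=X_{S^n_{\bar a}}$. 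For part d), I would show $X_{\bar a}$ is closed in the complete metric space $(X,d_T)$: if $x_j\to x$ with $x_j\in X_{\bar a}$, then any finite patch $x'\subset x$ agrees (after a small translation) with a patch of $x_j$ for large $j$ by definition of $d_T$, and that patch of $x_j$ is, up to translation, contained in some $x^k_{\bar a}$; hence so is a translate of $x'$, so $x\in X_{\bar a}$.

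The main obstacle is part a): unlike the single-substitution case, one cannot simply iterate one map and take $\bigcup_k S^k(D)$ around a fixed ``seed'' point, because the substitutions change with the level and there is no canonical fixed tile under the composition. The fix is precisely lemma \ref{super}, which guarantees that deep enough inflations $S^N_{\bar a}(D)$ contain, well inside them, translated copies of any prescribed lower-level patch $S^n_{\bar a}(D)$; iterating this and carefully choosing the translation vectors so that the copies are nested (not merely present) yields the exhausting sequence of patches. I would also remark that, by remark \ref{remsei}, when $\mathcal S$ is primitive all the constants $R,N_0$ can be chosen uniformly in $\bar a$, which is what will later give the uniformity needed for theorems \ref{ss} and \ref{cu}.
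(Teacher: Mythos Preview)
Your approach matches the paper's almost exactly: lemma \ref{super} is used to build a nested exhausting sequence of translated supertiles for a), primitivity gives independence of the seed tile for b), and the closedness argument for d) is the standard $d_T$-argument the paper gives verbatim.

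One genuine slip is in part c). You call it ``essentially a definitional check'' and assert that intermediate levels $x^{nj+r}_{[\bar a]_n}$ are patches of $x^{n(j+1)}_{[\bar a]_n}$. That claim need not hold: it would require a translated copy of $D$ to appear in $S_{a_{r+1}}\circ\cdots\circ S_{a_n}(D)$, i.e.\ a positive diagonal entry of $A_{a_{r+1}}\cdots A_{a_n}$, which primitivity of the \emph{sequence} does not guarantee for such a short product. The paper handles this correctly: given a patch appearing in $S^{k}_{[\bar a]_n}(D)$, one uses primitivity to find $N>k$ with $A_{a_{k+1}}\cdots A_{a_N}>0$, then takes any $m$ with $nm\ge N$, so that a copy of $D$ appears in $S_{a_{k+1}}\circ\cdots\circ S_{a_{nm}}(D)$ and hence the patch appears in $(S^n_{\bar a})^m(D)$. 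So c) is not purely definitional; primitivity is doing real work there, and you should invoke it explicitly rather than jumping by a single block of length $n$.
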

\begin{proof}
Take $D\in \mathcal{P}^1(X)$. By primitivity, the definition of  $X_{\bar{a}}$ is independent of the initial tile and, taking account lemma \ref{super}, for each $n>0$ there are  $\vec{t}_n\in\rd$, $k_n>0$ and $r_n>0$, with $\lim_n r_n=\infty$, such that
  the sequence $(x'_n)$ of patches $x'_n=\vec{t}_n+S_{\bar a}^{k_n}(D)$ satisfies: $x'_{n-1}\subset x'_n$ and $\mathrm{supp}(x'_{n-1})\subset  B_{r_n}\subset \mathrm{supp}(x'_n)$. Set
  \begin{equation}\label{x0}x_{\bar a}^\infty=\bigcup_{n\geq 1}x'_n\end{equation} and observe that $x_{\bar a}^\infty$ is a tiling of $\rd$ in $X_{\bar{a}}$. Hence $X_{\bar{a}}\neq \emptyset$.

It is clear that  $X_{S_{\bar a}^n}\subset X_{[\bar{a}]_n}$. Now, take $x\in X_{[\bar{a}]_n}$ and a finite patch $x'\subset x$. By definition, a translated copy of $x'$ appears in $S_{[\bar{a}]_n}^k(D)$ for some $k>0$. Take $N>k$ such that $A_{a_{k+1}}\ldots  A_{a_{N}}>0$ and $m$ such that $N\leq mn$. Then we also have $A_{a_{k+1}}\ldots  A_{a_{nm}}>0$. In particular, a translated copy of $D$ appears in $S_{a_{k+1}}\circ\ldots \circ S_{a_{nm}}(D)$. Consequently, a translated copy of $x'$ appears in $S_{[\bar{a}]_n}^{nm}(D)=(S_{[\bar{a}]_n}^{n})^m(D)$. This means that $x\in X_{S_{\bar a}^n}$.

  To prove that $X_{\bar{a}}$ is closed, take a sequence of tilings $(x_n)$ in  $X_{\bar{a}}$ converging to some $x\in X$. Take a patch $x''$ in $x$ and $r>0$ such that $\mathrm{supp}(x'')\subset B_{1/r}$. We know that there exists $n_0$ such that $d_T(x_n,x)<r$ for all $n>n_0$. This means that, for each $n>n_0$, there are patches $x'_n\in x_n[[B_{1/r}]]$ and $x'\in x[[B_{1/r}]]$, and a vector $\vec{t}_n$ with $\|\vec{t}_n\|<r$, such that $x'=\vec{t}_n+x'_n$. Since $x_n\in X_{\bar{a}}$, there exists a translation of $x'_n$, and consequently of $x''\subset x'$,  that is contained in some  $S_{\bar a}^{k_n}(D)$. Hence $x\in X_{\bar a}$.
 \end{proof}
Henceforth we assume that   the sequence of substitutions $(S_{a_n})$ is primitive. As  for substitution tiling spaces, recognizability is closely related with non-periodicity:
\begin{prop}
  Let  $\mathcal{S}=\{S_1,\ldots,S_k\}$ be a set of recognizable substitutions, $\bar a\in \Sigma$, and  $X_{\bar a}$  the corresponding multi-substitution tiling space. Then any tiling $x\in \xa$ is aperiodic. \end{prop}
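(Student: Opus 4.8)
The plan is the classical desubstitution argument, adapted here to the $S$-adic setting. I would argue by contradiction: suppose some $x\in\xa$ has a nonzero period, i.e. $\vec t+x=x$ for some $\vec t\in\rd$, $\vec t\neq\vec 0$. The idea is to show that $\vec t$ must be divisible by $\lambda_{\bar a}^n$ for every $n$, producing tilings with arbitrarily short nonzero periods; this contradicts the elementary fact that in a space of finite local complexity nonzero periods cannot be arbitrarily small.

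The first (and main) step is the \emph{desubstitution} lemma: for every $n\geq1$ there is a \emph{unique} tiling $z_n$ with $S_{\bar a}^n(z_n)=x$, and moreover $z_n\in X_{\sigma^n\bar a}$. Existence uses the definition of $\xa$ together with primitivity: every finite patch of $x$ is, up to translation, a sub-patch of some $S_{\bar a}^k(D)=S_{\bar a}^n\bigl(S_{a_{n+1}}\circ\cdots\circ S_{a_k}(D)\bigr)$ with $k\geq n$, and this patch carries the obvious partition of its tiles into translated copies of the level-$n$ supertiles $S_{\bar a}^n(D_i)$; exhausting $x$ by such patches and passing to a limit (finite local complexity, via a K\"onig-type diagonal argument) yields a partition of \emph{all} the tiles of $x$ into level-$n$ supertiles, hence the tiling $z_n$ with $S_{\bar a}^n(z_n)=x$. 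Since on any bounded region this partition agrees (up to a bounded collar, by a recognizability-radius argument) with one inherited from an $S_{\bar a}^k(D)=S_{\bar a}^n\bigl(S_{\sigma^n\bar a}^{k-n}(D)\bigr)$, the finite patches of $z_n$ are, up to translation, sub-patches of $S_{\sigma^n\bar a}^{k-n}(D)$, so $z_n\in X_{\sigma^n\bar a}$. Uniqueness is exactly where recognizability enters: each $S_i\colon X\to X$ is injective, so the composition $S_{\bar a}^n=S_{a_1}\circ\cdots\circ S_{a_n}\colon X\to X$ is injective, and any two tilings in $X$ with the same image under $S_{\bar a}^n$ coincide.

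Granting this, the rest is short. Iterating (S$_2$) gives $S_{\bar a}^n\!\bigl(\lambda_{\bar a}^{-n}\vec t+z_n\bigr)=\vec t+S_{\bar a}^n(z_n)=\vec t+x=x=S_{\bar a}^n(z_n)$, and $\lambda_{\bar a}^{-n}\vec t+z_n\in X$ because $X$ is translation invariant; so uniqueness forces $\lambda_{\bar a}^{-n}\vec t+z_n=z_n$. Thus $z_n$ has the nonzero period $\lambda_{\bar a}^{-n}\vec t$, of norm at most $\|\vec t\|/\lambda_{\min}^n$ with $\lambda_{\min}=\min_{i\in J}\lambda_i>1$ (the set $J$ being finite). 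On the other hand, since $\mathcal{T}^1(X)$ is finite there is $\rho>0$ such that every tile of every tiling in $X$ contains a ball of radius $\rho$ in its interior; if such a tiling had a period $\vec s$ with $0<\|\vec s\|<\rho$, then, taking a tile $D$ with $p+B_\rho\subseteq D^{^\circ}$, the translate $\vec s+D$ is again a tile of the tiling, is distinct from $D$ (a compact set cannot be invariant under a nonzero translation), yet $p\in D^{^\circ}\cap(\vec s+D)^{^\circ}$, contradicting the patch condition. Hence every nonzero period of a tiling in $X$ has norm $\geq\rho$; choosing $n$ with $\|\vec t\|/\lambda_{\min}^n<\rho$ contradicts the previous sentence, so $\vec t=\vec 0$ and $x$ is aperiodic.

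The main obstacle is the first step, and within it the \emph{existence} half plus the assertion $z_n\in X_{\sigma^n\bar a}$ — the uniqueness half being the painless formal consequence of recognizability. Making the informal ``exhaust $x$ and pass to a limit'' rigorous requires the usual compactness and recognizability-radius machinery for hierarchical tilings, and it is the only place where primitivity and recognizability are genuinely needed; I would package it as a separate lemma (``every $x\in\xa$ admits a unique level-$n$ desubstitution, lying in $X_{\sigma^n\bar a}$'') and then run the three-line period-shrinking argument above.
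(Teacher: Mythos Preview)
Your proposal is correct and follows the same desubstitution/period-shrinking argument as the paper. The paper is terser—it asserts the unique desubstitution $x_n\in X_{\sigma^n(\bar a)}$ with $S_{\bar a}^n(x_n)=x$ without the existence discussion you supply, and for the contradiction simply notes that for large $n$ the translate of any prototile by $\vec t/\lambda_{\bar a}^n$ overlaps itself.
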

  \begin{proof}The argument is standard.
  Take $x\in \xa$ and  suppose that $\vec t+x = x$ for some $\vec t\neq 0$. Since our substitutions are recognizable, for each $n\geq 1$ there exists a unique $x_n\in X_{\sigma^n (\bar{a})}$ such that $S_{\bar a}^n(x_n)=x$. We have
    $$S_{\bar a}^n(x_n)=\vec t+S_{\bar a}^n(x_n)=S_{\bar a}^n\Big(\frac{\vec t}{\lambda^n_{\bar a}}+x_n\Big),$$
    which means, by recognizability that
$x_n={\vec t}/{\lambda^n_{\bar a}}+x_n.$
    Now, for $n$ sufficiently large, it is clear that
    $\big\{\mathrm{supp}(D)+  {\vec t}/{\lambda^n_{\bar a}}\big\}\cap\mathrm{supp}(D)\neq \emptyset$
for any prototile $D$, which is a contradiction.

  \end{proof}

 It is well known that the set of periodic points of $\sigma$ is dense in $\Sigma$. Together with proposition \ref{Xa}, this result suggests  that any  multi-substitution tiling can be approximated arbitrarily  closely  by substitution tilings. In fact we have:

\begin{prop}For each $x\in X_{\bar{a}}$, there exists a sequence of tilings $(x_n)$, with $x_n\in X_{S^{j_n}_{\bar a}}$, for some $j_n\geq 1$, such that $\lim_n x_n=x$.
  \end{prop}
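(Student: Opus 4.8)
The plan is to approximate $x\in X_{\bar a}$ by tilings sitting inside substitution tiling spaces $X_{S_{\bar a}^{j_n}}=X_{[\bar a]_{j_n}}$ (using part (c) of Proposition \ref{Xa}), exploiting the facts that the periodic points of $\sigma$ are dense in $\Sigma$ and that a tiling is determined, up to arbitrarily small $d_T$-distance, by a single large patch around the origin.

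First I would fix $x\in X_{\bar a}$ and an integer $m>0$; the goal is to produce a tiling $x_m\in X_{S_{\bar a}^{j_m}}$ with $d_T(x_m,x)\le 1/m$. Since $\mathrm{supp}$ of a suitable finite patch $x'\subset x$ covers the ball $B_{m}$, the definition of $X_{\bar a}$ gives $k>0$ and $\vec t\in\rd$ with $\vec t+x'\subseteq x_{\bar a}^k=S_{\bar a}^k(D)$. By primitivity we may enlarge $k$ (replacing $x'$ by a bigger patch still containing the relevant ball) so that, using Lemma \ref{super}, $S_{\bar a}^k(D)$ contains a translated copy of $S_{\bar a}^n(D_i)$ for every prototile $D_i$, for some fixed $n$; concretely, choose $j_m\ge k$ large enough that $A_{a_{k+1}}\cdots A_{a_{j_m}}>0$, so that $S_{\bar a}^{j_m}(D)=S_{\bar a}^k\bigl(S_{a_{k+1}}\circ\cdots\circ S_{a_{j_m}}(D)\bigr)$ contains a translated copy of $S_{\bar a}^k(D')$ for every prototile $D'$.

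Now set $\bar b=[\bar a]_{j_m}$, the periodic sequence agreeing with $\bar a$ in the first $j_m$ coordinates. By Proposition \ref{Xa}(c), $X_{\bar b}=X_{S_{\bar a}^{j_m}}$. I claim $X_{\bar b}$ contains a tiling $x_m$ with $d_T(x_m,x)\le 1/m$. To see this, recall that $x_{\bar b}^{j_m\ell}=(S_{\bar a}^{j_m})^{\ell}(D)$, and that such patches, as $\ell\to\infty$, have supports growing to fill $\rd$ and contain translated copies of $S_{\bar a}^{j_m}(D)$; since $x_{\bar a}^{j_m}=S_{\bar a}^{j_m}(D)$ and $x_{\bar a}^{k}\subseteq x_{\bar a}^{j_m}$ (up to translation, by the enlargement above) contains $\vec t+x'$ with $\mathrm{supp}(\vec t + x')\supseteq B_m$, every $x_{\bar b}^{j_m\ell}$ with $\ell$ large contains a translated copy of $x'$ around an interior point far from its boundary. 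Passing to a limit in $X_{\bar b}$ (which is closed, by Proposition \ref{Xa}(d), hence compact, and contains a tiling of $\rd$ by the argument of Proposition \ref{Xa}(a)) of such patches centered appropriately, we obtain a tiling $x_m\in X_{\bar b}=X_{S_{\bar a}^{j_m}}$ that contains a translated copy $\vec s+x'$ of $x'$ with $B_m\subseteq\mathrm{supp}(\vec s+x')$ and $\|\vec s\|$ controlled; after translating $x_m$ by $-\vec s$ we may assume $\vec s=0$, whence $x'\subset x_m$ and $x'\subset x$ with $\mathrm{supp}(x')\supseteq B_m$, giving $d_T(x_m,x)\le 1/m$ directly from the definition of $d_T$. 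Finally, one has to check $j_m\to\infty$ is not required; it suffices that $j_m\ge 1$, which is automatic, although $j_m\ge k\to\infty$ in fact holds since larger $m$ forces larger $k$.

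The main obstacle I anticipate is the bookkeeping in the previous paragraph: making precise that a translated copy of the ``origin patch'' $x'$ of $x$ genuinely survives inside the periodic tilings of $X_{[\bar a]_{j_m}}$, i.e. controlling \emph{where} inside $S_{\bar a}^{j_m}(D)$ the relevant copy of $x'$ sits so that, after inflating by $(S_{\bar a}^{j_m})^\ell$, it lands in the interior and persists in a limiting tiling. This is exactly the content for which Lemma \ref{super} (and Remark \ref{remsei}) was prepared: it guarantees that inside a large-enough inflated tile one finds a translated copy of any prescribed bounded patch with its support strictly interior to a prescribed ball, so iterating the $j_m$-step substitution and taking a convergent subsequence in the compact space $X_{[\bar a]_{j_m}}$ produces the desired $x_m$. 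Everything else — the choice of $j_m$ via primitivity, the identification $X_{[\bar a]_{j_m}}=X_{S_{\bar a}^{j_m}}$, and the final distance estimate — is routine once this localization step is in place.
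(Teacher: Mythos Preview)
Your proposal is correct and follows essentially the same route as the paper: pick a patch $x'\in x[[B_m]]$, locate it inside a translated supertile $\vec t+S_{\bar a}^{j_m}(D)$, and then invoke the nested-patch construction from the proof of Proposition~\ref{Xa}(a) (together with the identification $X_{[\bar a]_{j_m}}=X_{S_{\bar a}^{j_m}}$) to build $x_m\in X_{S_{\bar a}^{j_m}}$ containing that translated supertile, hence agreeing with $x$ on $B_m$. The primitivity-based enlargement from $k$ to $j_m$ you perform is harmless but unnecessary---the paper simply takes $j_m=k$ and applies the construction directly.
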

\begin{proof} For each $n$ take a patch $x'_n\in x[[B_{n}]]$. By definition of multi-substitution tiling space,  we have $x'_n\subset \vec t_n+ S_{\bar a}^{j_n}(D)$, for some $j_n\geq 1$ and $\vec t_n\in \mathbb{R}^d$. Adapting the procedure we have used in the proof of proposition \ref{Xa} to construct a tiling in $X_{\overline{a}}$,  it is possible to construct a tiling $x_n\in X_{S^{j_n}_{\bar a}}=X_{[\bar{a}]_{j_n}}$ containing  $\vec t_n+  S_{\bar a}^{j_n}(D)$. Clearly we have $\lim_n x_n=x$.
\end{proof}

\section{Minimality and Repetitivity}
As Sadun and Frank \cite{FS} have shown, fusion tiling spaces are minimal and its elements are repetitive. For completeness, we shall next give a proof of this result in the particular case of multi-substitution tiling spaces.
\vspace{.20in}

 A \emph{dynamical system} will be a pair $(Y,G)$ where $Y$ is a compact metric space and $G$ is a continuous action of a group. $(Y,G)$ is \emph{minimal} if $Y$ is the orbit closure $\overline{\mathcal{O}(y)}$ of any of its elements $y$. A point $y\in Y$ is \emph{almost periodic} if
 $$G(y,U)=\{g\in G:\,\,g(y) \in U\}$$ is \emph{relatively dense} (that is, there exists
a compact set $K\subseteq G$ such that $g\cdot K$ intersects $G(y,U)$ for all $g\in G$) for every open set $U\subseteq Y$ with $G(y,U)\neq \emptyset$.
Minimality and almost periodicity are related by Gottschalk's theorem:

\begin{thm}\cite{Go} Let  $(Y,G)$ be a dynamical system. If $y\in Y$ is an almost periodic point, then $(\overline{\mathcal{O}(y)}, G)$ is
minimal. Moreover, if $(Y,G)$ is minimal, then any point in $Y$ is  almost periodic.
 \end{thm}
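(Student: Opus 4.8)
I would prove the two implications separately; for concreteness I take $G=\rd$ acting by translations, which is the only case needed in the sequel (the statement for a general group is the classical theorem of Gottschalk).

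\emph{Minimality $\Rightarrow$ almost periodicity.} Fix $y\in Y$ and an open set $U$ with $G(y,U)\neq\emptyset$, so that $U$ is nonempty and open. By minimality every orbit is dense, so the orbit of each $w\in Y$ meets $U$, i.e.\ $\vec g+w\in U$ for some $\vec g$, equivalently $w\in U-\vec g$. Hence $\{U-\vec g:\vec g\in\rd\}$ is an open cover of the compact space $Y$, from which I extract a finite subcover $Y=\bigcup_{i=1}^{m}(U-\vec g_i)$. Put $K=\{\vec g_1,\dots,\vec g_m\}$, a finite (hence compact) subset of $\rd$. For an arbitrary $\vec g\in\rd$, the point $\vec g+y$ lies in some $U-\vec g_i$, i.e.\ $\vec g+\vec g_i\in G(y,U)$; therefore $(\vec g+K)\cap G(y,U)\neq\emptyset$, which is precisely the relative density of $G(y,U)$. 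Thus $y$ is almost periodic.

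\emph{Almost periodicity $\Rightarrow$ minimality of the orbit closure.} Set $Z=\overline{\mathcal{O}(y)}$. For each $w\in Z$ the set $\overline{\mathcal{O}(w)}$ is a closed $G$-invariant subset of $Z$, so it suffices to prove that $y\in\overline{\mathcal{O}(w)}$ for every $w\in Z$: indeed, this gives $\mathcal{O}(y)\subseteq\overline{\mathcal{O}(w)}$, hence $Z=\overline{\mathcal{O}(w)}$, so every orbit is dense in $Z$ and $(Z,G)$ is minimal. To show this, fix $w\in Z$ and an open neighbourhood $U$ of $y$, and choose a $d_T$-ball $B(y,\varepsilon)\subseteq U$. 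Applying almost periodicity of $y$ to the neighbourhood $B(y,\varepsilon/2)$ produces a compact $K\subseteq\rd$ such that every $\vec t\in\rd$ can be written as $\vec t=\vec h-\vec k$ with $\vec h\in G(y,B(y,\varepsilon/2))$ and $\vec k\in K$. Since the action $\rd\times Y\to Y$ is continuous, its restriction to the compact set $K\times Y$ is uniformly continuous, so there is $\delta>0$ with $d_T(w',w'')<\delta\Rightarrow d_T(\vec k+w',\vec k+w'')<\varepsilon/2$ for all $\vec k\in K$. Now pick $\vec t$ with $d_T(\vec t+y,w)<\delta$ (possible because $w\in\overline{\mathcal{O}(y)}$), write $\vec t=\vec h-\vec k$ as above, and set $\vec s=\vec k$. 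Then $\vec s+(\vec t+y)=\vec h+y$, and $d_T(\vec s+w,y)\le d_T(\vec s+w,\vec h+y)+d_T(\vec h+y,y)<\varepsilon/2+\varepsilon/2=\varepsilon$, the first term being controlled by uniform continuity (as $\vec s=\vec k\in K$ and $d_T(\vec t+y,w)<\delta$) and the second because $\vec h\in G(y,B(y,\varepsilon/2))$. Hence $\vec s+w\in B(y,\varepsilon)\subseteq U$; since $U$ was an arbitrary neighbourhood of $y$, $y\in\overline{\mathcal{O}(w)}$.

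\emph{Main obstacle.} The delicate point is the second implication: one has to choose the correcting translation $\vec s$ inside a \emph{fixed} compact set, for otherwise uniform continuity of the action is not available and the estimate near $\vec t+y$ cannot be transported to the nearby point $w$. This is exactly what relative density of $G(y,B(y,\varepsilon/2))$ provides, together with the homogeneity of $\rd$ under translations (a translate of a relatively dense set is relatively dense with the same compact witness). For a general acting group the correction need not stay compact so comfortably, which is why the result is a theorem rather than an observation; in that generality I would simply invoke \cite{Go}.
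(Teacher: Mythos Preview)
Your proof is correct. Note, however, that the paper does not give its own proof of this statement: it is quoted as Gottschalk's classical theorem with a citation to \cite{Go}, so there is nothing in the paper to compare your argument against. Your proof for the case $G=\rd$ is the standard one and is entirely adequate for the paper's purposes; the only minor point worth making explicit is that $G(y,B(y,\varepsilon/2))\neq\emptyset$ because the identity $\vec 0$ lies in it, so the almost-periodicity hypothesis applies.
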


 It is well known \cite{Rob} that, for any primitive substitution $S$, $(X_S,T)$ is minimal. More generally, for multi-substitutions tiling spaces we have:
 \begin{thm}
   The dynamical system $(X_{\bar{a}},T)$ is minimal.
 \end{thm}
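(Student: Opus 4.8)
The plan is to use Gottschalk's theorem, so it suffices to show that every tiling $x\in X_{\bar a}$ is almost periodic under the translation action $T$; equivalently, that $x$ is \emph{repetitive}, i.e.\ every finite patch $x'\subset x$ recurs in $x$ with bounded gaps. Fix $x\in X_{\bar a}$ and a finite patch $x'\subset x$, and choose $n>0$ large enough and $\vec t\in\rd$ with $\vec t+x'\subseteq S^n_{\bar a}(D)$, which exists by definition of $X_{\bar a}$ (here $D$ is any prototile, the space being independent of the choice). So it is enough to prove that translated copies of the patch $S^n_{\bar a}(D)$ occur in $x$ with bounded gaps.

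The key step is an application of Lemma~\ref{super} (together with the primitivity hypothesis $(S_{a_n})$ standing throughout this section). By that lemma there are $R>0$ and $N_0>n$ such that for every $N>N_0$ and every ball $B$ of radius $R$ contained in $\mathrm{supp}(S^N_{\bar a}(D))$, a translated copy of $S^n_{\bar a}(D)$ sits inside $S^N_{\bar a}(D)$ with support inside $B$. Now take any ball $B_r\subset\rd$ of radius $r$; since $x\in X_{\bar a}$, the patch $x[[B_r]]$ consisting of tiles of $x$ meeting $B_r$ (enlarged slightly) is, up to translation, contained in some $S^N_{\bar a}(D)$ with $N$ as large as we like, in particular $N>N_0$. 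If $r$ was chosen so that $B_r$ contains a ball of radius $R$ — any $r\geq R$ works — then that ball, transported into $S^N_{\bar a}(D)$, contains a translated copy of $S^n_{\bar a}(D)$, hence a translated copy of $x'$, and this copy lies within $B_r$ (up to the translation relating the patch to its copy in $S^N_{\bar a}(D)$, whose displacement is controlled since both sit in a common bounded patch). Therefore every ball of radius $R$ in $\rd$ meets a translated copy of $x'$ in $x$, which is exactly relative density of the return set $\{\vec t\in\rd:\ \vec t+x'\subset x\}$, hence almost periodicity of $x$.

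Once every point of $X_{\bar a}$ is shown almost periodic, Gottschalk's theorem gives that $(\overline{\mathcal O(x)},T)$ is minimal for each $x$; but $\overline{\mathcal O(x)}\subseteq X_{\bar a}$ and, conversely, the repetitivity argument shows any $y\in X_{\bar a}$ has all its patches appearing in $x$, whence $y\in\overline{\mathcal O(x)}$, so $\overline{\mathcal O(x)}=X_{\bar a}$ and the whole system is minimal. (Alternatively one argues directly: given $x,y\in X_{\bar a}$ and $\varepsilon>0$, a large patch of $y$ around the origin appears, up to small translation, inside $x$, so $x$ has a point of its $T$-orbit within $\varepsilon$ of $y$.)

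The main obstacle is the bookkeeping in the second step: making precise that the patch of $x$ filling a given ball $B_r$ really is a sub-patch of a single $S^N_{\bar a}(D)$ for arbitrarily large $N$, and then controlling the translation vectors so that the copy of $x'$ produced inside $S^N_{\bar a}(D)$ genuinely lands inside the prescribed ball in $x$ rather than merely somewhere in $x$. This is handled by the finite-local-complexity hypothesis and by choosing $r$ comfortably larger than $R$ plus the diameter of $x'$; everything else is a direct invocation of Lemma~\ref{super}, Proposition~\ref{Xa}, and Gottschalk's theorem.
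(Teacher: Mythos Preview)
Your argument is correct, but the route through Gottschalk's theorem is a detour that buys nothing. Gottschalk only tells you that $\overline{\mathcal O(x)}$ is minimal; to conclude that $\overline{\mathcal O(x)}=X_{\bar a}$ you still have to show that every finite patch of an arbitrary $y\in X_{\bar a}$ occurs in $x$, and this step --- which you carry out correctly --- is already the full content of minimality. Your parenthetical ``alternative'' is in fact the paper's proof: given $x,y\in X_{\bar a}$ and $\varepsilon>0$, take $y'\in y[[B_{1/\varepsilon}]]$, find $n$ with a translate of $y'$ inside $S^n_{\bar a}(D)$, invoke Lemma~\ref{super} to get $R$, then embed a patch $x'\in x[[B_R]]$ in some large $S^N_{\bar a}(D)$ and read off the chain of inclusions $y'\subset \vec t_1+S^n_{\bar a}(D)\subset \vec t_2+x'$, whence $d_T(\vec t_2+x,y)<\varepsilon$. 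The paper then deduces repetitivity \emph{from} minimality via Gottschalk (Theorem~11), the reverse of your order. Since the same ingredients (Lemma~\ref{super} and the definition of $X_{\bar a}$) are used either way, the direct argument is cleaner; your main text could be shortened to that parenthetical.
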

 \begin{proof}
Let $x$ and $y$ be two tilings in $\xa$ and $\epsilon >0$. Fix $y'\in y[[B_{1/\epsilon}]]$. By definition of $\xa$, there is $n$ such that a translated copy of $y'$ can be found in $S_{\bar a}^n(D)$. Taking account lemma \ref{super}, there are $R>0$ and $N'>0$ such that, for any ball $B$ of radius $R$ with $B\subset \mathrm{supp}( S_{\bar a}^{N'}(D))$,  there is $\vec{t}$ for which $\vec{t}+S_{\bar a}^n(D)\subset S_{\bar a}^{N'}(D)$ and $\mathrm{supp}( \vec t + S_{\bar a}^{n}(D))\subset B $. On the other hand, again by definition of $\xa$, given a patch $x'\in x[[B_{R}]]$,  there is some $N''$ such that $S_{\bar a}^{N''}(D)$ contains a translated copy of $x'$. Hence, due to primitivity, we can take some $N\geq \max\{N',N''\}$ such that
$$y'\subseteq \vec t_1+S_{\bar a}^n(D)\subseteq \vec t_2+ x' \subseteq \vec t_3+ S_{\bar a}^{N}(D).$$ In particular, $d_T(\vec t_2+ x,y)<\epsilon$.
 \end{proof}
A tiling  $x$ of $\mathbb{R}^d$ is said to be \emph{repetitive}  if for every patch $x'$ of $x$ with bounded support there is some $r(x')>0$ such that, for every ball $B$ of $\mathbb{R}^d$ with radius $r(x')$, there exists $\vec{t}\in\mathbb{R}^d$ such that $\mathrm{supp}(\vec t+x' )\subseteq B$ and $\vec t+x' \subset x$. It is also common to refer to repetitive tilings as tilings satisfying the \emph{local isomorphism}  property \cite{Rad}. As explained in \cite{Rob}, for tiling dynamical systems $(X,T)$, repetitivity is equivalent to  {almost periodicity}.  From
Gottschalk's theorem it follows that:
\begin{thm}
  Any $x\in X_{\bar a}$ is repetitive.
\end{thm}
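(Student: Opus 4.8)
The plan is to obtain repetitivity as a direct consequence of the minimality of $(X_{\bar a},T)$ just established, by combining Gottschalk's theorem with the equivalence — recalled above for tiling dynamical systems $(X,T)$ — between repetitivity and almost periodicity. No new estimates are needed; the work is purely in quoting the right results in the right order.

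First I would check that $(X_{\bar a},T)$ is indeed a dynamical system in the sense fixed above, i.e.\ a compact metric space carrying a continuous group action. Compactness of $X_{\bar a}$ follows from Proposition \ref{Xa}(d) together with the compactness of $(X,d_T)$ under finite local complexity; continuity of $T$ is guaranteed by the same theorem; and $X_{\bar a}$ is $T$-invariant because every finite patch of $\vec t+x$ is a translate of a finite patch of $x$, hence admits a translate contained in some $x_{\bar a}^k$ whenever $x\in X_{\bar a}$ (this invariance was already implicitly used in the proof of minimality). Once this is in place, the minimality of $(X_{\bar a},T)$ lets us invoke the second half of Gottschalk's theorem to conclude that every point of $X_{\bar a}$ is almost periodic, and then the equivalence between almost periodicity and repetitivity for tiling dynamical systems yields that every $x\in X_{\bar a}$ is repetitive.

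Should a self-contained argument be preferred, one can instead mimic the proof of minimality: given a patch $x'$ of $x\in X_{\bar a}$, a translate of $x'$ lies in some $S_{\bar a}^n(D)$; by Lemma \ref{super} there are $R>0$ and $N_0$ so that for $N>N_0$ every ball of radius $R$ inside $\mathrm{supp}(S_{\bar a}^N(D))$ contains a translate of $S_{\bar a}^n(D)$, hence of $x'$; and every ball $B\subset\rd$ of radius $R$ is contained in the support of a translate of some $S_{\bar a}^N(D)$ with $N>N_0$ that is itself a sub-patch of $x$, because (as in the construction of $x_{\bar a}^\infty$) tilings in $X_{\bar a}$ are exhausted by translates of patches $S_{\bar a}^k(D)$ with arbitrarily large support. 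Combining these three facts, every such $B$ contains a translate of $x'$ lying inside $x$, so one may take $r(x')=R$. I do not anticipate a genuine obstacle here: the statement is a corollary of minimality, and the only points requiring a moment's care are the $T$-invariance and compactness of $X_{\bar a}$, both of which are routine.
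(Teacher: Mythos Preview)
Your proposal is correct and matches the paper's approach exactly: the paper also derives repetitivity from the minimality of $(X_{\bar a},T)$ via Gottschalk's theorem together with the repetitivity/almost-periodicity equivalence for tiling dynamical systems, and then remarks (just as you do in your second paragraph) that one can alternatively read it off directly from Lemma~\ref{super} by taking $r(x')=R$.
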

Of course, this can also be seen as an easy consequence of lemma \ref{super}. Given a tiling $x\in \xa$ and a finite patch $x'\subset x$, we have $x'\subset S_{\bar a}^n(D)$ for some $n$ and $D$. The radius $r(x')$ can be taken as the radius $R$ of lemma \ref{super}, which does not depend on the tiling $x$ of $\xa$ we take. On the other hand, $x'\in\mathcal{P}(X_{\bar b})$ for any $\bar b\in \Sigma$ with $d_\Sigma(\bar a, \bar b)<1/n$.
 Hence, taking account remark \ref{remsei}, we have:
 \begin{prop}\label{R}
 Assume that $\mathcal{S}$ is primitive and take $x'\subset S_{\bar a}^n(D)$. Then,  there exists $R>0$ such that,
 for any $\bar b\in \Sigma$ with $d_\Sigma(\bar a, \bar b)<1/n$,  any $x\in X_{\bar b}$ and any ball $B$ of radius $R$, a translated copy  $\vec t+x'$ of $x'$ can be founded in $x$ with $\mathrm{supp}(\vec t+x')\subseteq B$.
 \end{prop}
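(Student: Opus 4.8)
The plan is to deduce the statement from Lemma \ref{super}, used in the uniform form provided by Remark \ref{remsei}, essentially as indicated in the paragraph preceding the statement. First note that $d_\Sigma(\bar a,\bar b)<1/n$ forces $a_i=b_i$ for $i=1,\dots,n$, since $d_\Sigma(\bar a,\bar b)=1/L$ with $L$ the least index at which the two sequences differ, so $L>n$; hence $S_{\bar b}^{n}=S_{\bar a}^{n}$ and $x'$ is a subpatch of $S_{\bar b}^{n}(D)$ as well. We may assume $D$ is one of the prototiles, since $\mathrm{supp}(S_{\bar a}^{n}(D))$ is a translate of a dilate of $\mathrm{supp}(S_{\bar a}^{n}(D_j))$ for $[D]=[D_j]$, and the proposition only asks for some translate of $x'$. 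Now apply Remark \ref{remsei} at level $n$ to $D$ and fix, once and for all, $R>0$ and $N_0>n$ so that the conclusion of Lemma \ref{super} holds for \emph{every} sequence in $\Sigma$: for every $N>N_0$ and every ball of radius $R$ contained in $\mathrm{supp}(S_{\bar c}^{N}(D))$ there is a translate of $S_{\bar c}^{n}(D)$ lying inside $S_{\bar c}^{N}(D)$ with support contained in that ball. This $R$ is the radius claimed by the proposition.

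Now fix $\bar b$ as above, $x\in X_{\bar b}$, and a ball $B$ of radius $R$. Since $x$ tiles $\rd$ and has finite local complexity, the collection $z$ of tiles of $x$ meeting $B$ is a finite subpatch of $x$ with $B\subseteq\mathrm{supp}(z)$. By definition of $X_{\bar b}$ there are $\vec u\in\rd$ and $k>0$ with $\vec u+z\subseteq S_{\bar b}^{k}(D)$, and, by primitivity of $(S_{b_m})$, choosing $k'>N_0$ with $A_{b_{k+1}}\cdots A_{b_{k'}}>0$ one finds a translate of $S_{\bar b}^{k}(D)$, hence of $\vec u+z$, inside $S_{\bar b}^{k'}(D)$; so after replacing $k$ by $k'$ and $\vec u$ accordingly we may assume $k>N_0$. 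Applying the uniform Lemma \ref{super} with $N=k$ to the ball $\vec u+B\subseteq\mathrm{supp}(S_{\bar b}^{k}(D))$ yields $\vec s\in\rd$ with $\vec s+S_{\bar b}^{n}(D)\subset S_{\bar b}^{k}(D)$ and $\mathrm{supp}(\vec s+S_{\bar b}^{n}(D))\subset\vec u+B$. Since $x'\subseteq S_{\bar b}^{n}(D)$, the patch $\vec s+x'$ is a subpatch of $S_{\bar b}^{k}(D)$ whose support lies in $\vec u+B\subseteq\mathrm{supp}(\vec u+z)$; as $\vec s+x'$ and $\vec u+z$ are both subpatches of the single patch $S_{\bar b}^{k}(D)$, each tile of $\vec s+x'$ has nonempty interior contained in $\mathrm{supp}(\vec u+z)$, hence meets the interior of some tile of $\vec u+z$, and therefore --- tiles of a patch having pairwise disjoint interiors --- coincides with it; thus $\vec s+x'\subseteq\vec u+z\subseteq\vec u+x$. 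Taking $\vec t=\vec s-\vec u$ gives $\vec t+x'\subseteq x$ and $\mathrm{supp}(\vec t+x')\subseteq B$, as required.

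The only genuinely load-bearing input is Remark \ref{remsei}: without the uniformity of $R$ and $N_0$ over $\Sigma$ there would be no single radius serving all $\bar b$ with $d_\Sigma(\bar a,\bar b)<1/n$ at once. The rest is translation bookkeeping together with one soft point-set fact --- a subpatch of a patch whose support lies inside the support of another subpatch is actually contained in that subpatch, because tiles are closures of their interiors and a finite union of tile boundaries is nowhere dense. I expect that this step, rather than any quantitative estimate, is the one needing care.
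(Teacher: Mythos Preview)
Your proof is correct and follows essentially the same approach as the paper, which does not give a separate formal proof but derives the proposition from the sketch in the preceding paragraph via Lemma~\ref{super} and Remark~\ref{remsei}. You have faithfully expanded that sketch, and in particular your final paragraph makes explicit the one point the paper leaves implicit --- that a subpatch of $S_{\bar b}^{k}(D)$ whose support lies in $\mathrm{supp}(\vec u+z)$ is actually contained in $\vec u+z$ --- which is indeed the only place requiring a moment's thought.
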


\section{Statistical stability}

\subsection{Unique ergodicity} The unique ergodicity of the system $(X_{\bar a}, T)$ was established in \cite{FS} in the general framework of fusion tiling spaces. Next we remake the proof of the unique ergodicity for multi-substitution tiling spaces, based on a result of Solomyak \cite{Sol}, and prove that
the convergence of patch frequencies to their ergodic limits is locally uniform in some open subset of $\Sigma$ (theorem \ref{cu}).

\vspace{.20in}

Given a patch $x'\in\mathcal{P}(\xa)$  and a measurable subset $U$ of $\rd$, define the \emph{cylinder set} $X^{\bar{a}}_{x',U}$ as
$$X^{\bar{a}}_{x',U}=\{x\in X_{\bar{a}}: \,\,\, x'+\vec{t}\subset x\,\,\mbox{for some $\vec{t}\in U$} \}.$$ These cylinders form a semi-algebra and a topology base for $\xa$.

For any set $H\subset \mathbb{R}^d$ and $r \geq 0$ we define
\begin{equation*}
H^{+r}=\{\vec t\in\rd:\,\mathrm{dist}(\vec{t},H)\leq r\},
\quad H^{-r}=\{\vec t\in H:\,\mathrm{dist}(\vec{t},\partial H)\geq r\},
\end{equation*}
 where $\partial H$  denotes the boundary of $H$.
A sequence $(H_n)$ of subsets of $\rd$ is a \emph{Van Hove} sequence if for any $r\geq 0$
$$\lim_n\frac{\mathrm{vol}((\partial H_n)^{+r})}{\mathrm{vol}(H_n)}=0.$$
 Clearly,  the sequence $(\mathrm{supp}(S_{\bar a}^n(D)))$ is a Van Hove sequence for each  $D\in\mathcal{P}^1(X)$.

 Consider the tiling $x_{\bar a}^\infty$ given by \eqref{x0}. For any patch $x'\in \mathcal{P}(\xa)$, denote by $L^{\bar a}_{x'}(H)$ (respectively, $N^{\bar a}_{x'}(H)$) the number of distinct translated copies of $x'$ in $x_{\bar a}^\infty$ whose support is completely contained in $H$ (respectively, intersects the border of $H$). If $y'\in \mathcal{P}(\xa)$ is another patch, we denote by  $L_{x'}(y')$ the number of distinct translated copies of $x'$ in $y'$ and by $\mathrm{vol}(y')$ the Euclidean volume of the support of $y'$.

\begin{thm}\label{freq vs van Hove}\cite{Sol}
  The  dynamical system $(\xa,T)$ is uniquely ergodic if for any patch $x'\in \mathcal{P}(\xa)$ there is a number $\mathrm{freq}_{\bar a}(x')>0$ such that, for any Van Hove sequence $(H_n)$,
  $$\freq_{\bar{a}} (x')=\lim_n \frac{L^{\bar a}_{x'}(H_n)}{\mathrm{vol}(H_n)}.$$ In this case, the unique ergodic measure $\mu_{\bar a}$ on $\xa$ satisfies
  $$\mu_{\bar a}(X^{\bar a}_{x',U})=\freq_{\bar{a}} (x')\mathrm{vol}(U)$$
  for all Borel subsets $U$ with $\mathrm{diam}(U)<\eta$, where $\eta>0$ is such that any prototile contains a ball of radius $\eta$.
\end{thm}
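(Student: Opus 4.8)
The plan is to prove both assertions at once by computing the value of an arbitrary $T$-invariant Borel probability measure on the cylinder sets. I would first reduce the theorem to the single statement that, for every patch $x'\in\mathcal{P}(\xa)$ and every open $U\subset\rd$ with $\mathrm{diam}(U)<\eta$, and for one (hence, by the hypothesis, any) Van Hove sequence $(H_n)$,
\[
\lim_n\ \sup_{x\in\xa}\ \Bigl|\ \frac{1}{\mathrm{vol}(H_n)}\int_{H_n}\mathbf{1}_{X^{\bar a}_{x',U}}(T_{\vec t}x)\,d\vec t\ -\ \freq_{\bar a}(x')\,\mathrm{vol}(U)\ \Bigr|=0.
\]
Indeed, granting this, for any $T$-invariant Borel probability $\mu$ on $\xa$, Fubini and $T$-invariance give $\int_{\xa} A_n(x)\,d\mu(x)=\mu(X^{\bar a}_{x',U})$ for every $n$, where $A_n(x)$ denotes the average inside the display, so letting $n\to\infty$ (the convergence being uniform, hence valid against the probability measure $\mu$) forces $\mu(X^{\bar a}_{x',U})=\freq_{\bar a}(x')\,\mathrm{vol}(U)$. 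Since the cylinders generate the Borel $\sigma$-algebra of $\xa$ and form an intersection-stable family — by finite local complexity a finite intersection of cylinders is a finite union of cylinders — this determines $\mu$ uniquely, so $(\xa,T)$ is uniquely ergodic; and the displayed formula is the last claim of the theorem for open $U$, extending to Borel $U$ of diameter $<\eta$ because $U\mapsto\mu_{\bar a}(X^{\bar a}_{x',U})$ is finitely additive there (two distinct copies of $x'$ in a single tiling have translation vectors at distance $\geq 2\eta$, so cylinders attached to disjoint $U$'s of total diameter $<\eta$ are disjoint) together with a routine monotone-class argument.

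The displayed convergence I would derive from an elementary combinatorial identity. Fix $x'$ and $U$ with $\mathrm{diam}(U)<\eta$. Unwinding the definitions, $T_{\vec s}x\in X^{\bar a}_{x',U}$ precisely when $x$ contains a translate $x'+\vec u$ with $\vec s\in U-\vec u$; moreover, if $x'+\vec u_1\neq x'+\vec u_2$ are two copies of $x'$ inside $x$, then, choosing a tile $D$ of $x'$, the tiles $D+\vec u_1$ and $D+\vec u_2$ are distinct tiles of $x$, hence have disjoint interiors, and since $D$ contains a ball of radius $\eta$ we get $\|\vec u_1-\vec u_2\|\geq 2\eta>\mathrm{diam}(U)$; thus the translates $U-\vec u$ are pairwise disjoint. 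Consequently $\mathrm{vol}\{\vec s\in H_n:T_{\vec s}x\in X^{\bar a}_{x',U}\}=\sum_{\vec u:\,x'+\vec u\subset x}\mathrm{vol}\bigl(H_n\cap(U-\vec u)\bigr)$, where each summand equals $\mathrm{vol}(U)$ if the copy $x'+\vec u$ lies well inside $H_n$, equals $0$ if it lies well outside, and lies in $[0,\mathrm{vol}(U)]$ only for those $\vec u$ whose copy meets the neighbourhood $(\partial H_n)^{+C}$, with $C$ depending only on $x'$ and $U$. Hence $A_n(x)=\mathrm{vol}(U)\,L^{x}_{x'}(H_n)/\mathrm{vol}(H_n)$ up to an error of size at most $\mathrm{vol}(U)\,N^{x}_{x'}\bigl((\partial H_n)^{+C}\bigr)/\mathrm{vol}(H_n)$, where $L^{x}_{x'}$ and $N^{x}_{x'}$ are the counts of the statement taken inside $x$ itself; by finite local complexity the number of copies of $x'$ meeting $(\partial H_n)^{+C}$ is bounded by a constant (independent of $x$) times $\mathrm{vol}\bigl((\partial H_n)^{+C}\bigr)$, so the Van Hove property drives this error to $0$ uniformly in $x$.

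It remains to show that $L^{x}_{x'}(H_n)/\mathrm{vol}(H_n)\to\freq_{\bar a}(x')$ uniformly in $x\in\xa$, and this is where I expect the real difficulty to lie, since the hypothesis speaks only about the distinguished tiling $x_{\bar a}^{\infty}$ of \eqref{x0}. First, applying the hypothesis to all the translated sequences $(\vec t_n+H_n)$ — which are Van Hove with the same boundary behaviour as $(H_n)$ — shows that $L^{\bar a}_{x'}(\,\cdot\,)/\mathrm{vol}(\,\cdot\,)\to\freq_{\bar a}(x')$ uniformly over the position of the averaging window within $x_{\bar a}^{\infty}$. Second, by the minimality and repetitivity established above (in particular Lemma \ref{super} and Proposition \ref{R}), every sufficiently large patch of an arbitrary $x\in\xa$ occurs, up to translation, inside $x_{\bar a}^{\infty}$ and conversely, with a uniform bound on how large one must go; squeezing $L^{x}_{x'}(H_n)$ between the counts of $x'$ in a matching window of $x_{\bar a}^{\infty}$ that is eroded, resp.\ enlarged, by a fixed width, and once more using that $(H_n)$ is Van Hove to absorb the mismatch, transfers the uniform frequency from $x_{\bar a}^{\infty}$ to $x$. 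Together with the preceding paragraphs this gives the displayed uniform convergence, and hence the theorem. The one further point deserving attention is bookkeeping: every auxiliary width (the $C$ above, the eroded and enlarged windows) must be chosen once and for all, independently of $n$ and of $x$, so that all boundary contributions are genuinely $o(\mathrm{vol}(H_n))$ uniformly.
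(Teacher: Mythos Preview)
The paper does not supply its own proof of this theorem: it is stated with the citation \cite{Sol} and used as a black box, the surrounding lemmas (\ref{cDi}, \ref{exists freq P}, \ref{freq VH implies uni erg}) being devoted instead to verifying its hypothesis. There is therefore nothing in the paper to compare your argument against.

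That said, your sketch is the standard route and is essentially correct. Two remarks. First, the transfer from $x_{\bar a}^\infty$ to an arbitrary $x\in\xa$ can be made cleaner than the erosion/enlargement squeeze you describe: if $P_n$ is the patch of $x$ consisting of all tiles meeting $H_n^{+r}$ (with $r$ the diameter of $\mathrm{supp}(x')$), then by the very definition of $\xa$ a translate $\vec s_n+P_n$ sits inside some $S_{\bar a}^{k}(D)$ and hence inside $x_{\bar a}^\infty$, and since tiles do not overlap one gets the exact equality $L^{x}_{x'}(H_n)=L^{\bar a}_{x'}(\vec s_n+H_n)$, with no boundary discrepancy at all. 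Second, the step ``applying the hypothesis to all translated sequences gives uniformity in position'' deserves to be spelled out as the usual subsequence argument: if uniformity failed, one could extract $n_k\to\infty$ and $\vec t_k$ with $|L^{\bar a}_{x'}(\vec t_k+H_{n_k})/\mathrm{vol}(H_{n_k})-\freq_{\bar a}(x')|\geq\epsilon$, and $(\vec t_k+H_{n_k})$ is itself a Van Hove sequence, contradicting the hypothesis. With these two points made explicit, your argument is complete.
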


For a primitive substitution tiling space $X_{S}$,  such \emph{frequencies} exist \cite{GH,Sol}.  For example, if $\vec p=(p_1,\dots,p_l)$ and $\vec q=(V_1,\ldots,V_l)$ are  right and left PF-eigenvectors, respectively, of $A_S$ satisfying $\vec{p}\cdot \vec{q}=1$, then
$\freq_{S}{(D_i)}=p_i$.
Consequently,  $(X_S,T)$ is uniquely ergodic. We want to extend this result for multi-substitutions tiling spaces.

So, start with a primitive set  $\mathcal{S}=\{S_1,\ldots, S_k\}$ of substitutions acting on the set of prototiles $\{D_1,\ldots, D_l\}$.
 Being primitive, there exists some $N(\mathcal{S})>0$ such that $A>0$  for any $A\in\mathcal{A}_\mathcal{S}^{N(\mathcal{S})}$, the set of all $N(\mathcal{S})$-products of matrices from $\{\frac{A_1}{\omega_1},\ldots ,\frac{A_k}{\omega_k}\},$ where $\omega_i$ is the PF-eigenvalue of $A_i$.

For each $i,j\in\{1,\ldots,l\}$, set
  \begin{equation}\label{eqcDi}c_{\bar{a}}(D_i):=\lim_n \frac{L_{D_i}(S_{\bar a}^n(D_j))}{\mathrm{vol}(S_{\bar a}^n(D_j)))}=\lim_n\frac{(A_{\bar a}^n)_{ij}}{\omega_{\bar a}^n\mathrm{vol}(D_j)}.\end{equation}
\begin{lem}\label{cDi}
 The limit \eqref{eqcDi} exists, does not depend on $j$ and is uniform with respect to $\bar{a}\in \Sigma$. Moreover, there are $C>0$ and $0<\theta<1$ such that, for any $\delta>0$  and   $\bar a, \bar b\in\Sigma$ with $d_\Sigma(\bar a,\bar b)<\delta$, we have
\begin{equation}\label{contfreq}
|c_{\bar a}(D_i)-c_{\bar{b}}(D_i)|<C\theta^{1/\delta},\end{equation}
for all $i\in\{1,\ldots,l\}$.
\end{lem}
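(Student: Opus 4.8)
The plan is to study the normalized products $P_n^{\bar a}:=M_{a_1}M_{a_2}\cdots M_{a_n}$, where $M_i:=A_i/\omega_i$, by means of Birkhoff's contraction principle for the Hilbert projective metric. The key observation is that the volume vector $\vec q=(V_1,\ldots,V_l)$ is a \emph{common} left eigenvector of all the $M_i$ with eigenvalue $1$ (each $A_i$ has $\vec q$ as left eigenvector for $\omega_i=\lambda_i^d$), so $\vec q^{\,T}P_n^{\bar a}=\vec q^{\,T}$ for every $n$. Writing $e_j$ for the $j$-th standard basis vector, so that $(A_{\bar a}^n)_{ij}=\omega_{\bar a}^n\,(P_n^{\bar a}e_j)_i$, this gives $\vec q\cdot(P_n^{\bar a}e_j)=V_j$; equivalently $\hat w_n^{(j)}:=P_n^{\bar a}e_j/V_j$ lies, for all $n$, on the fixed affine simplex $\mathcal H_1:=\{w\in\rd:\ w\ge 0,\ \vec q\cdot w=1\}$. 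Assertion \eqref{eqcDi} is exactly the statement that $\hat w_n^{(j)}\to(c_{\bar a}(D_1),\ldots,c_{\bar a}(D_l))$ as $n\to\infty$, with limit independent of $j$.

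Next I would record a uniform contraction estimate. Primitivity of $\mathcal S$ furnishes $N=N(\mathcal S)$ with every $N$-fold product of the $M_i$ strictly positive, and there are only finitely many (namely $k^N$) such products; hence $\tau_0:=\max\{\tau(Q):Q\text{ an }N\text{-product of the }M_i\}<1$, where $\tau(\cdot)$ is the Birkhoff contraction coefficient. Since each $A_i$ (hence each $M_i$) has no zero column, every finite product of the $M_i$ sends nonzero nonnegative vectors to nonzero nonnegative vectors and satisfies $\vec q\cdot(M_{a_1}\cdots M_{a_m}w)=\vec q\cdot w$; moreover every $N$-product maps $\mathcal H_1$ into the fixed compact set $K:=\bigcup_Q Q(\mathcal H_1)$, which lies in the relative interior of $\mathcal H_1$. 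On $K$ the Hilbert metric $d_H$ is finite, continuous and bi-Lipschitz equivalent to the Euclidean metric, so $\delta_1:=\operatorname{diam}_{d_H}(K)<\infty$.

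For the main step, fix $\bar a$ and $n\ge N$, and group the factors of $P_n^{\bar a}$ as $B_1B_2\cdots B_{q_n-1}\,(B_{q_n}T_n)$, where $q_n=\lfloor n/N\rfloor$, each $B_m$ is a block of $N$ consecutive factors, and $T_n$ is the remaining block of fewer than $N$ factors. For $x,y\in\mathcal H_1$ the vectors $T_nx,T_ny$ again lie in $\mathcal H_1$, so $B_{q_n}T_nx,B_{q_n}T_ny\in K$ and $d_H(B_{q_n}T_nx,B_{q_n}T_ny)\le\delta_1$; applying the remaining $q_n-1$ blocks, each contracting $d_H$ by a factor at most $\tau_0$, gives $d_H(P_n^{\bar a}x,P_n^{\bar a}y)\le\tau_0^{\,q_n-1}\delta_1$. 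With $x=e_j/V_j$ and $y=e_{j'}/V_{j'}$ this yields $d_H(\hat w_n^{(j)},\hat w_n^{(j')})\le\tau_0^{\,q_n-1}\delta_1$, and with $x=e_j/V_j$, $y=(M_{a_{n+1}}\cdots M_{a_m})(e_j/V_j)\in\mathcal H_1$ it yields $d_H(\hat w_n^{(j)},\hat w_m^{(j)})\le\tau_0^{\,q_n-1}\delta_1$ for all $m>n$. Hence $(\hat w_n^{(j)})_{n\ge N}$ is Cauchy in $(K,d_H)$, so it converges in the Euclidean metric to some $\gamma^{(j)}\in K$; letting $n\to\infty$ in the first inequality forces $\gamma^{(j)}=\gamma^{(j')}=:(c_{\bar a}(D_1),\ldots,c_{\bar a}(D_l))$. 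Thus the limit \eqref{eqcDi} exists, is independent of $j$, and is positive. As $N$, $\tau_0$, $\delta_1$ and the bi-Lipschitz constant depend only on $\mathcal S$, letting $m\to\infty$ in the second inequality and passing from $d_H$ to the Euclidean metric gives a uniform rate $|(\hat w_n^{(j)})_i-c_{\bar a}(D_i)|\le C'\theta_0^{\,n}$, with $\theta_0:=\tau_0^{1/N}$ and $C',\theta_0$ independent of $\bar a$, $i$ and $j$; this is the asserted uniformity in $\bar a$.

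Estimate \eqref{contfreq} then follows by a triangle inequality: if $d_\Sigma(\bar a,\bar b)<\delta$ then $a_i=b_i$ for all $i\le 1/\delta$, so $P_n^{\bar a}=P_n^{\bar b}$ for $n:=\lfloor 1/\delta\rfloor$, hence $\hat w_n^{(j),\bar a}=\hat w_n^{(j),\bar b}$ and $|c_{\bar a}(D_i)-c_{\bar b}(D_i)|\le 2C'\theta_0^{\,n}\le(2C'/\theta_0)\,\theta_0^{\,1/\delta}$, which is \eqref{contfreq} with $C:=2C'/\theta_0$, $\theta:=\theta_0$. The only real obstacle I anticipate is the Hilbert-metric bookkeeping in the middle steps — verifying that the short tail blocks $T_n$ are harmless (they are $d_H$-nonexpansive and preserve $\mathcal H_1$) and that the iterates remain in a fixed compact subset of $\operatorname{relint}\mathcal H_1$ on which $d_H$ and the Euclidean metric are comparable; granting this, the uniform contraction $\tau_0<1$, available because there are only finitely many $N$-blocks, delivers everything else, and the uniformity in $\bar a$ and the bound \eqref{contfreq} come out essentially for free.
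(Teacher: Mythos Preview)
Your proof is correct. Both your argument and the paper's hinge on the same two ingredients---the common left eigenvector $\vec q$ (which pins all iterates to the fixed simplex $\{\vec q\cdot w=1\}$) and the uniform strict positivity of $N(\mathcal S)$-fold products---but the contraction is executed differently. The paper works directly in Euclidean geometry: it lets $\Delta_{\bar a}^n$ be the convex hull of the columns of $E_{\bar a}^n$, observes that each column of $E_{\bar a}^{n+N(\mathcal S)}$ is a convex combination of the columns of $E_{\bar a}^n$ with all coefficients bounded below by some $\theta'>0$, and deduces by an elementary barycentric trick that $\Delta_{\bar a}^{n+N(\mathcal S)}$ sits (up to translation) inside $(1-\theta' l)\Delta_{\bar a}^n$, forcing the diameters to shrink geometrically. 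You instead invoke the Hilbert projective metric and Birkhoff's contraction coefficient, which packages the same phenomenon abstractly. The paper's route is entirely self-contained and avoids any metric machinery; yours is shorter once the Hilbert-metric facts (nonexpansiveness of nonnegative matrices, $\tau(Q)<1$ for $Q>0$, comparability with the Euclidean metric on compacta of the open simplex) are granted, and it makes the uniformity in $\bar a$ and the exponential rate drop out immediately from the finiteness of the set of $N$-blocks. The one point worth tightening in your write-up is the passage from $d_H$ to the Euclidean norm: you should state explicitly that on the compact $K\subset\operatorname{relint}\mathcal H_1$ there is a constant $c>0$ with $\|x-y\|\le c\,d_H(x,y)$, since that is what converts the $d_H$-Cauchy estimate into the Euclidean bound you need.
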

\begin{proof}Let $\bar a\in \Sigma$. For each $n\geq 1$, consider the matrix $E_{\bar a}^n=[(E_{\bar a}^n)_{ij}]$ defined by
$$(E_{\bar a}^n)_{ij}=\frac{(A_{\bar a}^n)_{ij}}{\omega_{\bar a}^n\mathrm{vol}(D_j)}.$$
Let $\Delta_{\bar a}^n\subset \rd$ be the convex hull of the columns of $E_{\bar a}^n$. It is easy to check that each column of $E_{\bar a}^{n+N}$ sits in $\Delta_{\bar a}^n$. Indeed,
$$(E_{\bar a}^{n+N})_{ij}=\sum_{k=1}^l \frac{(A_{\bar a}^n)_{ik}}{\omega_{\bar a}^n\mathrm{vol}(D_k)}\frac{(A_{\sigma^{n}(\bar a)}^N)_{kj}\mathrm{vol}(D_k)}{\omega_{\sigma^{n}(\bar a)}^N\mathrm{vol}(D_j)}.$$
Hence, the $j$-column $v_{\bar a,j}^{n+N}$ of $E_{\bar a}^{n+N}$  is given by
\begin{equation}\label{vs}
v_{\bar a,j}^{n+N}=\sum_{k=1}^l v_{\bar a,k}^{n}\frac{(A_{\sigma^n(\bar a)}^N)_{kj}\mathrm{vol}(D_k)}{\omega_{\sigma^{n}(\bar a)}^N\mathrm{vol}(D_j)}.\end{equation}
Since $$\sum_{k=1}^l\frac{(A_{\sigma^n(\bar a)}^N)_{kj}\mathrm{vol}(D_k)}{\omega_{\sigma^{n}(\bar a)}^N\mathrm{vol}(D_j)}=1,$$
we have $v_{\bar a,j}^{n+N}\in \Delta_{\bar a}^n$, that is  $\Delta_{\bar a}^{{n+N}}\subseteq \Delta_{\bar a}^{{n}}$ for all $N>0$. Set $\Delta_{\bar a}=\bigcap_{n=1}^\infty \Delta_{\bar a}^{{n}}$ and take $\theta'$ (not depending on $\bar a$) satisfying $\theta' l<1$ and
$$0<\theta'<\min _{A\in\mathcal{A}_\mathcal{S}^{N(\mathcal{S})}}\min_{k,j} \Big\{\frac{A_{kj}\mathrm{vol}(D_k)}{\mathrm{vol}(D_j)}\Big\}.$$
Taking account \eqref{vs}, we have
\begin{align*}
  v_{\bar a,j}^{n+N(\mathcal{S})}&=\sum_{k=1}^l (1-\theta'l)v_{\bar a,k}^{n}\Big(\frac{(A_{\sigma^n(\bar a)}^{N(\mathcal{S})})_{kj}\mathrm{vol}(D_k)}{(1-\theta'l)\omega_{\sigma^n(\bar a)}^{N(\mathcal{S})}\mathrm{vol}(D_j)}-\frac{\theta'}{1-\theta'l}\Big)+ \sum_{k=1}^l v_{\bar a,k}^{n}\theta'.
\end{align*}
Observe that
$$\sum_{k=1}^l\Big( \frac{(A_{\sigma^n(\bar a)}^{N(\mathcal{S})})_{kj}\mathrm{vol}(D_k)}{(1-\theta'l)\omega_{\sigma^n(\bar a)}^{N(\mathcal{S})}\mathrm{vol}(D_j)}-\frac{\theta'}{1-\theta'l}\Big)=1.$$
Hence $\Delta_{\bar a}^{n+N(\mathcal{S})}$ is contained, up to translation, in $(1-\theta'l)\Delta_{\bar a}^n$. Then
\begin{equation}\label{diam}
\mathrm{diam}(\Delta_{\bar a}^{1+nN(\mathcal{S})})\leq \rho(1-\theta'l)^n
\end{equation} for all $n$, where $\rho$ is the maximum of the $l$ possible values of $\mathrm{diam}(\Delta_{\bar a}^1)$.
  Consequently $\Delta_{\bar a}$ has diameter zero, hence it consists of a single point, which means that the limit \eqref{eqcDi} exists and does not depend on $j$. Moreover, this limit is  uniform with respect to $\bar{a}\in \Sigma$ since the majoration \eqref{diam} does not depend on $\bar a$.

Finally, note that if $d_{\Sigma}(\bar a,\bar b)=1/k<\delta$, that is $a_j=b_j$ and $\Delta_j:=\Delta_{\bar a}^j=\Delta_{\bar b}^j$ for all $j\in\{1,\ldots,k-1\}$, then
$$|c_{\bar a}(D_i)-c_{\bar b}(D_i) |\leq \mathrm{diam}(\Delta_{k-1})\leq C\theta^{1/\delta}$$
for  $\theta=(1-\theta'l)^{1/N(\mathcal{S})}$ and some constant $C>0$.
\end{proof}

Take a finite patch $x'\in\mathcal{P}(X)$.  For $\bar a\in \Sigma$, set
\begin{equation}\label{cx}c_{\bar a}({x'}):=\lim_n \frac{L_{x'}(S_{\bar a}^n(D_j))}{\mathrm{vol}(S_{\bar a}^n(D_j)))}.\end{equation}

\begin{lem}\label{exists freq P}
The limit \eqref{cx} exists and does not depend on $j$. Moreover, if $x'\in\mathcal{P}(X_{\bar a})$, then the limit is uniform in a small neighborhood of $\bar a$.
\end{lem}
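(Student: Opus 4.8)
The plan is to reduce everything to Lemma~\ref{cDi} by splitting a level-$n$ supertile into level-$m$ supertiles at a fixed intermediate scale $m$. For $n>m$ one has $S_{\bar a}^n(D_j)=S_{\bar a}^m\big(S_{\sigma^m(\bar a)}^{n-m}(D_j)\big)$, so $S_{\bar a}^n(D_j)$ is a patch tiled by translated copies of the supertiles $S_{\bar a}^m(D_1),\dots,S_{\bar a}^m(D_l)$, the copy of $S_{\bar a}^m(D_i)$ occurring exactly $L_{D_i}\big(S_{\sigma^m(\bar a)}^{n-m}(D_j)\big)$ times and no tile of $S_{\bar a}^n(D_j)$ being split among two of them. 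A translated copy of $x'$ inside $S_{\bar a}^n(D_j)$ therefore either lies inside one such level-$m$ supertile, or else has a tile meeting another one; writing $R_{m,n}\ge 0$ for the number of the latter (``straddling'') copies,
\[
L_{x'}\big(S_{\bar a}^n(D_j)\big)=\sum_{i=1}^{l} L_{x'}\big(S_{\bar a}^m(D_i)\big)\,L_{D_i}\big(S_{\sigma^m(\bar a)}^{n-m}(D_j)\big)+R_{m,n}.
\]
Dividing by $\mathrm{vol}(S_{\bar a}^n(D_j))=\omega_{\bar a}^m\,\mathrm{vol}\big(S_{\sigma^m(\bar a)}^{n-m}(D_j)\big)$ and letting $n\to\infty$ with $m$ fixed, Lemma~\ref{cDi} makes the main sum converge to
\[
g_m(\bar a):=\frac{1}{\omega_{\bar a}^m}\sum_{i=1}^{l} L_{x'}\big(S_{\bar a}^m(D_i)\big)\,c_{\sigma^m(\bar a)}(D_i),
\]
a number depending neither on $j$ nor on the initial tile.

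The next step is to show that the straddling term is negligible after letting first $n\to\infty$ and then $m\to\infty$. Here I would use finite local complexity in the crude form: the number of translated copies of a fixed patch inside a patch $y'$ is at most the number of tiles of $y'$, hence at most $\mathrm{vol}(y')/V_{\min}$ with $V_{\min}=\min_i\mathrm{vol}(D_i)$. A short topological argument (using that the level-$m$ supertiles partition $S_{\bar a}^n(D_j)$ into whole tiles) shows that a distinguished tile of any straddling copy lies within a fixed distance $\rho_1=\mathrm{diam}(\mathrm{supp}\,x')+\max_i\mathrm{diam}(D_i)$ of the union $\Gamma$ of the boundaries of the level-$m$ supertiles; since $\Gamma$ is a union of $L_{D_i}(S_{\sigma^m(\bar a)}^{n-m}(D_j))$ translated copies of $\partial\,\mathrm{supp}\,S_{\bar a}^m(D_i)$, dividing by the volume gives
\[
\frac{R_{m,n}}{\mathrm{vol}(S_{\bar a}^n(D_j))}\le \frac{1}{V_{\min}}\sum_{i=1}^{l}\frac{L_{D_i}(S_{\sigma^m(\bar a)}^{n-m}(D_j))}{\mathrm{vol}(S_{\sigma^m(\bar a)}^{n-m}(D_j))}\cdot\frac{\mathrm{vol}\big((\partial\,\mathrm{supp}\,S_{\bar a}^m(D_i))^{+\rho_1}\big)}{\omega_{\bar a}^m}.
\]
As $n\to\infty$ the right side tends, again by Lemma~\ref{cDi}, to a bound $\varepsilon_m(\bar a)$, and $\varepsilon_m(\bar a)\to 0$ as $m\to\infty$ because $\big(\mathrm{supp}\,S_{\bar a}^m(D_i)\big)_m$ is a Van Hove sequence and the frequencies $c_{\sigma^m(\bar a)}(D_i)$ are bounded, uniformly in the sequence, by $1/V_{\min}$ (the common left eigenvector $(\mathrm{vol}\,D_1,\dots,\mathrm{vol}\,D_l)$ yields $\sum_k\mathrm{vol}(D_k)(A_{\bar c}^n)_{kj}=\omega_{\bar c}^n\mathrm{vol}(D_j)$). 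Combining, $\big|\tfrac{L_{x'}(S_{\bar a}^n(D_j))}{\mathrm{vol}(S_{\bar a}^n(D_j))}-g_m(\bar a)\big|\le \varepsilon_m(\bar a)$ plus a term tending to $0$ as $n\to\infty$; hence $(g_m(\bar a))_m$ is Cauchy, its limit is the asserted $c_{\bar a}(x')$, and independence of $j$ is inherited from $g_m$.

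For the local uniformity, the observation is that all scale-$m$ data — the numbers $L_{x'}(S_{\bar a}^m(D_i))$, the factor $\omega_{\bar a}^m$, and the Van Hove ratios entering $\varepsilon_m(\bar a)$ — depend only on $a_1,\dots,a_m$, whereas the remainder rate from Lemma~\ref{cDi} is uniform over all sequences. Given $x'\in\mathcal{P}(X_{\bar a})$, fix $n_0$ with $x'\subset S_{\bar a}^{n_0}(D)$ (so by Proposition~\ref{R} the neighborhood $d_\Sigma(\bar a,\bar b)<1/n_0$ keeps $x'\in\mathcal{P}(X_{\bar b})$). For $\bar b$ with $d_\Sigma(\bar a,\bar b)<1/m$ and $m\ge n_0$ we then get $g_m(\bar b)$ assembled from exactly the same scale-$m$ data as $g_m(\bar a)$, $\varepsilon_m(\bar b)=\varepsilon_m(\bar a)$, and $|g_m(\bar b)-c_{\bar b}(x')|\le\varepsilon_m(\bar b)$ (the Cauchy estimate holds at every sequence, using $\varepsilon_{m'}(\bar b)\to 0$ as $m'\to\infty$), so that
\[
\Big|\tfrac{L_{x'}(S_{\bar b}^n(D_j))}{\mathrm{vol}(S_{\bar b}^n(D_j))}-c_{\bar b}(x')\Big|\le 2\varepsilon_m(\bar a)+K_m\,\eta(n-m)
\]
for all such $\bar b$ and all $n$, where $K_m$ depends only on $\bar a,m,x'$ and $\eta(p)\to0$ uniformly; choosing $m$ large and then $n$ large beats any prescribed error. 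I expect the only delicate point to be the straddling estimate: one must verify both that $R_{m,n}$ becomes negligible only in the iterated limit — this is exactly the Van Hove property of the level-$m$ supertiles — and that its bound involves only finitely many letters of $\bar a$, which is precisely what propagates the convergence uniformly onto a neighborhood of $\bar a$.
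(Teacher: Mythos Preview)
Your existence argument is the paper's: split $S_{\bar a}^n(D_j)$ into level-$m$ supertiles, feed the main term to Lemma~\ref{cDi}, and kill the straddling remainder by Van~Hove. (Your absolute boundary-volume bound on $R_{m,n}$ is in fact the one the paper uses in Theorem~\ref{ss}; in the present lemma the paper writes the relative form $N_{x'}^{\bar a}(n,m,i,j)<\epsilon\,L_{x'}(S_{\bar a}^m(D_i))(A_{\sigma^m(\bar a)}^{n-m})_{ij}$, but either works.)

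The uniformity argument, however, has a gap. Your quantity $\varepsilon_m(\bar a)$ is the $n\to\infty$ limit of the straddling bound and therefore carries the factors $c_{\sigma^m(\bar a)}(D_i)$, which depend on the \emph{tail} $(a_{m+1},a_{m+2},\dots)$. Thus for $d_\Sigma(\bar a,\bar b)<1/m$ one does \emph{not} have $\varepsilon_m(\bar b)=\varepsilon_m(\bar a)$ (and likewise $g_m(\bar b)\ne g_m(\bar a)$). More importantly, your displayed estimate is asserted only on the neighborhood $\{d_\Sigma(\bar a,\bar b)<1/m\}$; since you then let $m\to\infty$, you obtain uniformity on a neighborhood that \emph{shrinks with the prescribed error}, not on a fixed one. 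The repair is already implicit in your own remarks: replace $c_{\sigma^m(\bar b)}(D_i)$ by the uniform bound $1/V_{\min}$ and use $\lambda_{\bar b}^m\ge(\min_i\lambda_i)^m$ to get, for \emph{every} $\bar b\in\Sigma$ and every $n>m$,
\[
\frac{R^{\bar b}_{m,n}}{\mathrm{vol}(S_{\bar b}^n(D_j))}\ \le\ \frac{1}{V_{\min}^2}\sum_{i=1}^{l}\mathrm{vol}\!\big((\partial D_i)^{+\rho_1/(\min_i\lambda_i)^m}\big)\ =:\ \hat\varepsilon_m\xrightarrow[m\to\infty]{}0.
\]
Together with the uniformity in Lemma~\ref{cDi} and the uniform bound $L_{x'}(S_{\bar b}^m(D_i))/\omega_{\bar b}^m\le \mathrm{vol}(D_i)/\mathrm{vol}(x')$ on the coefficients of the main term, this yields $\big|L_{x'}(S_{\bar b}^n(D_j))/\mathrm{vol}(S_{\bar b}^n(D_j))-c_{\bar b}(x')\big|\le 2\hat\varepsilon_m+K\,\eta(n-m)$ for all $\bar b$, hence uniform convergence on any fixed neighborhood (indeed on all of $\Sigma$).

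For comparison, the paper reaches uniformity differently: it keeps the relative bound and, to make it hold with a fixed $\epsilon$ uniformly in $\bar b$ near $\bar a$, invokes Proposition~\ref{R} to produce a radius $R$ with a guaranteed copy of $x'$ in every $R$-ball, and then compares the boundary volume to the number $L(R,\lambda,i)$ of disjoint $R$-balls. Your absolute-bound route, once corrected as above, is actually the cleaner one here and makes the appeal to Proposition~\ref{R} unnecessary.
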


\begin{proof}
Take a finite patch $x'\in\mathcal{P}(X)$ and a sequence $\bar a\in \Sigma$.
Given a subset $H$ of $\rd$, observe that
\begin{equation}\label{vol}
\frac{L^{\bar a}_{x'}(H)}{\mathrm{vol}(H)}\leq \frac{1}{{\mathrm{vol}(x')}}.
\end{equation}

  Given $n>m$,
write
$$S_{\sigma^m(\bar a)}^{n-m}(D_j)=\bigcup_{i,k} D_{ik}\,,$$ where $i\in \{1,\ldots ,l\}$, $k\in \{1,\ldots, \big(A_{\sigma^m(\bar a)}^{n-m}\big)_{ij}\}$ and each
 $D_{ik}$ is a translated copy of the prototile $D_i$. Denote by $N_{x'}^{\bar a}(n,m,i,j)$ the number of translated copies of $x'$ contained in $S_{\bar a}^n(D_j)$ whose support  intersects the boundary of some $\mathrm{supp}(S_{\bar a}^m(D_{ik}))$.
Since the sequence $(S_{\bar{a}}^m(D_i))$ is Van Hove, for each $\epsilon >0$ there exists $m(\bar a)$ such that
\begin{equation*}\label{Nx}
N_{x'}^{\bar a}(n,m,i,j)<\epsilon L_{x'}(S_{\bar{a}}^m(D_i))\big(A_{\sigma^m(\bar a)}^{n-m}\big)_{ij},
\end{equation*}
for all $m>m(\bar a)$ and $i\in\{1,\ldots,l\}$. Hence
\begin{equation}\label{conf}
\sum_{i=1}^l\frac{L_{x'}(S_{\bar{a}}^m(D_i))\big(A_{\sigma^m(\bar a)}^{n-m}\big)_{ij}}{\omega_{\bar a}^m\mathrm{vol}(S_{\sigma^m(\bar a)}^{n-m}(D_j))}\leq \frac{L_{x'}(S_{\bar a}^n(D_j))}{\mathrm{vol}(S_{\bar a}^n(D_j)))}\leq  (1+\epsilon)\sum_{i=1}^l\frac{L_{x'}(S_{\bar{a}}^m(D_i))\big(A_{\sigma^m(\bar a)}^{n-m}\big)_{ij}}{\omega_{\bar a}^m\mathrm{vol}(S_{\sigma^m(\bar a)}^{n-m}(D_j))}.
\end{equation}
Taking account \eqref{vol} and lemma \ref{cDi}, we have
\begin{align*}
  \limsup_n\frac{L_{x'}(S_{\bar a}^n(D_j))}{\mathrm{vol}(S_{\bar a}^n(D_j))}-\liminf_n\frac{L_{x'}(S_{\bar a}^n(D_j))}{\mathrm{vol}(S_{\bar a}^n(D_j))}&\leq \frac{\epsilon}{\mathrm{vol} (x')}.
\end{align*}
Since this holds for an arbitrary $\epsilon$, it follows that the limit $c_{\bar a}({x'})$ exists. The independence of $c_{\bar a}(x')$ with respect to $j$ follows from the observation that
 the lower and upper bounds in \eqref{conf} do not depend on $j$ when $n$ goes to infinity.

Suppose now that $x'\in\mathcal{P}(X_{\bar a})$. Let us prove that the limit is uniform in a small neighborhood of $\bar a$. We have $x'\subset S_{\bar a}^{n_0}(D)$ for some $n_0$ and $D$.
 Take the radius $R>0$ associated to $x'$ given by proposition \ref{R}. For $\lambda>0$, let $L(R,\lambda,i)$ be the maximum number of open disjoint balls of radius $R$ contained in $\lambda D_i$.  Clearly, there exists $\lambda_0$ such that
$$\frac{\mathrm{vol}((\partial{\lambda D_i)}^{+r})}{\mathrm{vol}(x')}   <\epsilon L(R,\lambda,i)$$
for all $\lambda>\lambda_0$, with $r$ the diameter of $\mathrm{supp}(x')$. Let $\omega=\min_i\{\omega_i\}$ and take $m_0>0$ such that $\omega^{m_0}>\lambda_0$. Then, for all $m>m_0$ and $\bar b\in \Sigma$ with $d_\Sigma(\bar a, \bar b)<1/n_0$, we have $x'\in\mathcal{P}(X_{\bar b})$ and
\begin{align*}
N_{x'}^{\bar b}(n,m,i,j)&< \frac{\mathrm{vol}\big((\partial{\omega_{\bar b}^m D_i)}^{+r}\big)}{\mathrm{vol}(x')}(A_{\sigma^m(\bar b)}^{n-m}\big)_{ij}\\&<\epsilon L(R,\lambda,i) (A_{\sigma^m(\bar b)}^{n-m}\big)_{ij} \\&<\epsilon L_{x'}(S_{\bar{b}}^m(D_i))\big(A_{\sigma^m(\bar b)}^{n-m}\big)_{ij},
\end{align*}
and we are done.
\end{proof}

\begin{lem}\label{freq VH implies uni erg}
Assume that the limit \eqref{cx} exists. Then,
  for  any Van Hove sequence  $(H_n)$,
  $$c_{\bar a}(x')=\lim_n \frac{L^{\bar a}_{x'}(H_n)}{\mathrm{vol}(H_n)}.$$
\end{lem}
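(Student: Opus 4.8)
The goal is to upgrade the existence of the limit $c_{\bar a}(x')$ along the specific Van Hove sequence $(\mathrm{supp}(S_{\bar a}^n(D_j)))$ to the statement that the \emph{same} limit is obtained along an \emph{arbitrary} Van Hove sequence $(H_n)$. The standard mechanism for this kind of argument is a ``two-sided sandwich'': fit as many disjoint translated copies of a large supertile $\mathrm{supp}(S_{\bar a}^m(D_i))$ as possible inside $H_n$, and conversely cover $H_n$ by a not-too-wasteful collection of such supertiles; the Van Hove property controls the boundary error in both directions, while the already-established limit controls the density of $x'$ inside each supertile.

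**Key steps.** First, fix a small $m$ and recall from Lemma~\ref{super} (or directly from primitivity) that for $m$ large enough every prototile $D_i$ appears, up to translation, inside $S_{\bar a}^{m}(D_j)$, so every supertile $S_{\bar a}^m(D_i)$ is itself tiled by copies of the fixed set of ``$m$-supertiles'' $\{\mathrm{supp}(S_{\bar a}^m(D_i))\}_{i=1}^l$. These $m$-supertiles have uniformly bounded diameter, say $\le \rho_m$. Second, given a Van Hove sequence $(H_n)$: take the tiling $x_{\bar a}^\infty$ of \eqref{x0}, which is itself a union of supertiles $S_{\bar a}^{m}(D_i)$; restrict this supertile partition to $H_n$. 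The supertiles entirely contained in $H_n^{-\rho_m}$ contribute, by the already-known convergence $L_{x'}(S_{\bar a}^m(D_i))/\mathrm{vol}(S_{\bar a}^m(D_i)) \to c_{\bar a}(x')$ as $m\to\infty$ together with \eqref{conf}, a density within $\epsilon$ of $c_{\bar a}(x')$; the remaining supertiles all meet $(\partial H_n)^{+\rho_m}$, whose volume is $o(\mathrm{vol}(H_n))$ by the Van Hove condition, and each contributes at most $1/\mathrm{vol}(x')$ copies per unit volume by \eqref{vol}. Third, combine: for $m$ fixed large (so that the per-supertile density is within $\epsilon$ of $c_{\bar a}(x')$) one gets
\[
\Big|\frac{L^{\bar a}_{x'}(H_n)}{\mathrm{vol}(H_n)}-c_{\bar a}(x')\Big| \le \epsilon + \frac{1}{\mathrm{vol}(x')}\cdot\frac{\mathrm{vol}((\partial H_n)^{+\rho_m})}{\mathrm{vol}(H_n)},
\]
up to a further boundary term accounting for copies of $x'$ straddling two supertiles (also controlled by the Van Hove property applied to the supertile boundaries, exactly as in the derivation of \eqref{conf}). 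Letting $n\to\infty$ kills the last summand, and then $\epsilon\to 0$ finishes the proof.

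**Main obstacle.** The delicate point is bookkeeping the copies of $x'$ whose support is \emph{not} contained in any single $m$-supertile: these are counted neither by the ``interior supertile'' term nor cleanly by the boundary term. This is handled precisely as in the proof of Lemma~\ref{exists freq P}: for $m$ large the number $N_{x'}^{\bar a}$ of such straddling copies is $\le \epsilon$ times the total count, because the supertiles $(S_{\bar a}^m(D_i))$ form a Van Hove sequence and the boundary layer they see has relatively vanishing volume. One must also ensure the supertile partition of $x_{\bar a}^\infty$ is genuinely a partition covering all of $\rd$, which is immediate from the construction \eqref{x0}. Apart from this, the argument is a routine Van Hove sandwich and requires no new input beyond \eqref{vol}, \eqref{conf} and Lemma~\ref{exists freq P}.
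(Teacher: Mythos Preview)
Your proposal is correct and follows essentially the same route as the paper: partition the fixed tiling $x_{\bar a}^\infty$ into $m$-supertiles, sandwich $L^{\bar a}_{x'}(H_n)$ between sums over the supertiles contained in $H_n$ and those meeting $H_n$, use the assumed limit to make the per-supertile density $\epsilon$-close to $c_{\bar a}(x')$ and the Van Hove property of $(S_{\bar a}^m(D_i))$ to control the straddling copies, then use the Van Hove property of $(H_n)$ to kill the boundary-layer volume. The paper's version is slightly more explicit in writing out the two-sided inequality (your displayed estimate suppresses the $(1+\epsilon)$ factor coming from the straddling term), but the structure and inputs are identical.
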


\begin{proof}Although the proof follows closely that of \cite{LMS} for substitution tilings, we present it here since it is essential to establish theorem \ref{cu}.
For each $m>0$, there is $\mathcal{D}^{\bar a}_m\subseteq\mathcal{P}^1(X_{\bar a})$ such that the tiling $x_{\bar a}^\infty$ in \eqref{x0} is the disjoint union of patches  $S_{\bar a}^{m}(D)$, with $D\in \mathcal{D}^{\bar a}_m$.  Given $m>0$ and $n>0$, define
\begin{align*}G^{\bar a}_{m,n}:=\{D\in\mathcal{D}^{\bar a}_m: \mathrm{supp}(S_{\bar a}^m(D))\cap H_n \neq\emptyset\},\,\,\, H^{\bar a}_{m,n}:=\{D\in\mathcal{D}^{\bar a}_m: \mathrm{supp}(S_{\bar a}^m(D))\subseteq H_n \}.
\end{align*}
Hence
\begin{equation}\label{ineq aux freq v hove 0}\sum_{D\in H^{\bar a}_{m,n}}L_{x'}(S_{\bar a}^m(D))\leq L^{\bar a}_{x'}(H_n)\leq \sum_{D\in G^{\bar a}_{m,n}} \Big(L_{x'}(S_{\bar a}^m(D))+N^{\bar a}_{x'}(\partial S_{\bar a}^m(D))\Big),\end{equation}
where $\partial S_{\bar a}^m(D)$ denotes the border of the support of $S_{\bar a}^m(D)$.
Now, fix $\epsilon>0$. Taking account that $(S_{\bar a}^m(D_j))$ is a Van Hove sequence, we can take $m$ large enough so that, for every $D\in \mathcal{D}^{\bar a}_m$, we have
\begin{equation}\label{unif}
\left|\frac{L_{x'}( S_{\bar a}^m(D))}{\textrm{vol}(S_{\bar a}^m(D))}-c_{\bar a}(x')\right|<\epsilon\quad\textrm{and}\quad
N_{x'}(\partial S_{\bar a}^m(D))<\epsilon L_{x'}( S_{\bar a}^m(D)).\end{equation}
Together with \eqref{ineq aux freq v hove 0}, this gives
\begin{equation}\label{ineq aux freq v hove}(c_{\bar a}(x')-\epsilon)\sum_{D\in H^{\bar a}_{m,n}}\textrm{vol}(S_{\bar a}^m(D))\leq L^{\bar a}_{x'}(H_n)\leq(1+\epsilon)(c_{\bar a}(x')+\epsilon) \sum_{D\in G^{\bar a}_{m,n}}\textrm{vol}(S_{\bar a}^m(D)).\end{equation}
On the other hand, note that, setting $t_m:=\max_{j}\{\textrm{diam}(S_{\bar a}^m(D_j)\}$, since $m$ is fixed, for large enough $n$ we have
$$\sum_{D\in H^{\bar a}_{m,n}}\textrm{vol}(S_{\bar a}^m(D))\geq \textrm{vol}(H_n^{-t_m})\geq (1-\epsilon)\textrm{vol}(H_n)$$
and
$$\sum_{D\in G^{\bar a}_{m,n}}\textrm{vol}(S_{\bar a}^m(D))\leq \textrm{vol}(H_n^{+t_m})\leq (1+\epsilon)\textrm{vol}(H_n),$$
which, combining with  \eqref{ineq aux freq v hove}, concludes the proof, since $\epsilon>0$ is arbitrary.
\end{proof}

Combining theorem \ref{freq vs van Hove} with lemmas \ref{exists freq P}  and \ref{freq VH implies uni erg}, with $c_{\bar a}(x')=\mathrm{freq}_{\bar a}(x')$, we conclude that
\begin{thm}\label{unique erg}
 If $\mathcal{S}$ is primitive, $(\xa,T)$ is uniquely ergodic for all $\bar a\in \Sigma$.
\end{thm}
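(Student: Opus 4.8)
The plan is to verify the hypotheses of Theorem \ref{freq vs van Hove} for the system $(\xa,T)$: that every patch $x'\in\mathcal{P}(\xa)$ admits a strictly positive patch frequency which is realized as the limit along every Van Hove sequence.

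First I would fix $\bar a\in\Sigma$. Since $\mathcal{S}$ is primitive, the sequence of substitutions $(S_{a_n})$ is primitive, so Lemmas \ref{super}, \ref{cDi}, \ref{exists freq P} and \ref{freq VH implies uni erg} all apply. Given a patch $x'\in\mathcal{P}(\xa)$, Lemma \ref{exists freq P} guarantees that the limit $c_{\bar a}(x')$ in \eqref{cx} exists and is independent of the chosen prototile $D_j$; Lemma \ref{freq VH implies uni erg} then upgrades this to $c_{\bar a}(x')=\lim_n L^{\bar a}_{x'}(H_n)/\mathrm{vol}(H_n)$ for every Van Hove sequence $(H_n)$. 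Putting $\freq_{\bar a}(x'):=c_{\bar a}(x')$ produces exactly the limit required by Solomyak's criterion, so unique ergodicity of $(\xa,T)$, together with the description $\mu_{\bar a}(X^{\bar a}_{x',U})=\freq_{\bar a}(x')\,\mathrm{vol}(U)$ of the unique measure on small cylinder sets, follows immediately.

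The one ingredient not yet recorded is the strict positivity $c_{\bar a}(x')>0$, and this is the step I expect to need the only genuine argument. I would use Proposition \ref{R} (equivalently Lemma \ref{super}): writing $x'\subset S^{n_0}_{\bar a}(D)$ for suitable $n_0$ and $D$, there is $R>0$ such that every ball of radius $R$ sitting inside any $S^n_{\bar a}(D_j)$ contains a translated copy of $x'$. Packing $S^n_{\bar a}(D_j)$ with disjoint balls of radius $R$, and using the Van Hove property of $(S^n_{\bar a}(D_j))_n$ to see that the boundary layer is asymptotically negligible, shows that $L_{x'}(S^n_{\bar a}(D_j))$ is bounded below by a fixed positive multiple of $\mathrm{vol}(S^n_{\bar a}(D_j))$; hence $c_{\bar a}(x')>0$.

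In short, the analytic substance is already contained in Lemmas \ref{cDi}--\ref{freq VH implies uni erg}, so the proof of Theorem \ref{unique erg} is largely a matter of assembling these pieces; the only mild obstacle is making the positivity estimate precise, where the uniform recurrence radius furnished by Proposition \ref{R} does the work.
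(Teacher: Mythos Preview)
Your proposal is correct and follows the paper's own route: the paper's proof of Theorem~\ref{unique erg} is the single sentence ``Combining theorem~\ref{freq vs van Hove} with lemmas~\ref{exists freq P} and~\ref{freq VH implies uni erg}, with $c_{\bar a}(x')=\mathrm{freq}_{\bar a}(x')$,'' which is exactly the assembly you describe. You are in fact more careful than the paper in one respect: Theorem~\ref{freq vs van Hove} as stated requires $\mathrm{freq}_{\bar a}(x')>0$, and the paper does not pause to check this, whereas you supply the repetitivity argument via Proposition~\ref{R} (essentially the same packing estimate that appears implicitly in the second half of the proof of Lemma~\ref{exists freq P}).
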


The following theorem establishes that the patch frequencies converge uniformly to their ergodic limits in an open subset of $\Sigma$.

\begin{thm}\label{cu}
 If $x'\in\mathcal{P}(X_{\bar a})$ and $H_n$ is a Van Hove sequence, then the sequences $$\frac{L^{\bullet}_{x'}(H_n)}{\mathrm{vol}(H_n)}$$ converge uniformly to $\mathrm{freq}_{\bullet}(x')$ in a small neighborhood of $\bar a$.
\end{thm}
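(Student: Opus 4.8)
The plan is to combine the local uniformity already obtained in Lemma \ref{exists freq P} with a careful inspection of the proof of Lemma \ref{freq VH implies uni erg}, making sure every estimate that appears there can be taken uniform over a neighborhood of $\bar a$. First I would fix $x'\in\mathcal{P}(X_{\bar a})$, so that $x'\subset S_{\bar a}^{n_0}(D)$ for some $n_0$ and some tile $D$; by Proposition \ref{R} there is a radius $R>0$ such that for every $\bar b\in\Sigma$ with $d_\Sigma(\bar a,\bar b)<1/n_0$ we have $x'\in\mathcal{P}(X_{\bar b})$, and a translated copy of $x'$ sits in every ball of radius $R$ inside any tiling of $X_{\bar b}$. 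This is exactly the uniform-repetitivity input that will let the Van Hove counting argument run simultaneously for all such $\bar b$.

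Next I would revisit the two estimates \eqref{unif} in the proof of Lemma \ref{freq VH implies uni erg}. The first, $|L_{x'}(S_{\bar b}^m(D))/\mathrm{vol}(S_{\bar b}^m(D))-\mathrm{freq}_{\bar b}(x')|<\epsilon$ for all $D\in\mathcal{D}^{\bar b}_m$, holds for $m$ large enough, and by Lemma \ref{exists freq P} the required $m$ can be taken the same for all $\bar b$ in a small neighborhood $V$ of $\bar a$, since the convergence in \eqref{cx} is uniform there. The second, $N_{x'}(\partial S_{\bar b}^m(D))<\epsilon L_{x'}(S_{\bar b}^m(D))$, is precisely the kind of border estimate established uniformly in the second half of the proof of Lemma \ref{exists freq P}: the Van Hove property of $(\mathrm{supp}(S_{\bar b}^m(D_i)))$ is uniform in $\bar b$ because the dilation factors $\omega_{\bar b}^m$ are bounded below by $\omega^m$ with $\omega=\min_i\omega_i$, and $L_{x'}(S_{\bar b}^m(D_i))\geq L(R,\omega_{\bar b}^m,i)$ grows with $m$ uniformly. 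So there is a single $m$, valid for all $\bar b\in V$ (shrinking $V$ inside $\{d_\Sigma(\cdot,\bar a)<1/n_0\}$ if necessary), for which both inequalities in \eqref{unif} hold for every prototile-level patch $S_{\bar b}^m(D)$.

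With $m$ fixed and uniform, the remainder of the argument in Lemma \ref{freq VH implies uni erg} is purely geometric: inequality \eqref{ineq aux freq v hove 0} is a tautology valid for any $\bar b$, and the comparison of $\sum_{D\in H^{\bar b}_{m,n}}\mathrm{vol}(S_{\bar b}^m(D))$ and $\sum_{D\in G^{\bar b}_{m,n}}\mathrm{vol}(S_{\bar b}^m(D))$ with $\mathrm{vol}(H_n)$ uses only the Van Hove property of $(H_n)$ and the fact that $t_m:=\max_j\mathrm{diam}(S_{\bar b}^m(D_j))$ is bounded by $\omega_{\bar b}^m\max_j\mathrm{diam}(D_j)\leq(\max_i\lambda_i)^{m}\max_j\mathrm{diam}(D_j)$, i.e. a bound independent of $\bar b\in V$. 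Hence for $n$ large, uniformly over $\bar b\in V$,
\begin{equation*}
(\mathrm{freq}_{\bar b}(x')-\epsilon)(1-\epsilon)\,\mathrm{vol}(H_n)\leq L^{\bar b}_{x'}(H_n)\leq(1+\epsilon)^2(\mathrm{freq}_{\bar b}(x')+\epsilon)\,\mathrm{vol}(H_n),
\end{equation*}
and since $\mathrm{freq}_{\bar b}(x')\leq 1/\mathrm{vol}(x')$ is bounded by \eqref{vol}, letting $\epsilon\to0$ gives $\sup_{\bar b\in V}|L^{\bar b}_{x'}(H_n)/\mathrm{vol}(H_n)-\mathrm{freq}_{\bar b}(x')|\to0$, which is the claimed uniform convergence.

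The main obstacle is making the choice of $m$ genuinely independent of $\bar b$: one must check that the threshold $m(\bar a)$ in Lemma \ref{exists freq P} and the threshold $m_0$ governing the border estimate are stable under small perturbations of the sequence, which rests on the uniform lower bound $\omega_{\bar b}^m\geq\omega^m$ for the dilation factors and on the fact that, by Proposition \ref{R}, the same patch $x'$ with the same repetitivity radius $R$ serves all nearby $\bar b$. Once that uniformity is in hand, the counting inequalities and the passage $\epsilon\to0$ are routine and literally identical to the single-sequence case.
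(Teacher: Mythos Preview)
Your proposal is correct and follows essentially the same approach as the paper: the paper's proof is a one-sentence remark that the result follows from Lemma~\ref{exists freq P} together with the proof of Lemma~\ref{freq VH implies uni erg}, because the estimates \eqref{unif} can be taken uniform in a neighborhood of $\bar a$, and you have simply spelled out the details of that observation. One small notational slip: the diameter of $S_{\bar b}^m(D_j)$ scales by the dilation factor $\lambda_{\bar b}^m$, not by $\omega_{\bar b}^m$, but your stated upper bound $(\max_i\lambda_i)^m\max_j\mathrm{diam}(D_j)$ is correct and the argument is unaffected.
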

\begin{proof}
This follows easily from lemma \ref{exists freq P} and from the proof of lemma \ref{freq VH implies uni erg}, since  \eqref{unif} holds for any sequence in a small neighborhood of $\bar a$.
\end{proof}

We denote by $\mu_{\bar a}:=\mu_{\bar a,\mathcal{S}}$ the unique ergodic measure of $(\xa,T)$. Two ergodic dynamical systems $(X,G,\mu)$ and $(Y,H,\nu)$ are said to be  \emph{isomorphic} if there exist a group isomorphism $\xi:G\to H$ and a measure preserving homeomorphism $F:X\to Y$ such that $F\circ g=\xi(g)\circ F$ for all $g\in G$.

\begin{thm}
   Let  $\mathcal{S}=\{S_1,\ldots,S_k\}$ be a set of strong recognizable substitutions.
   Then the ergodic dynamical systems  $(X_{\bar a},T,\mu_{\bar a})$ and $(X_{\sigma(\bar a)},T,\mu_{\sigma(\bar a)})$ are isomorphic.
   \end{thm}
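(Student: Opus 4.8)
The plan is to use the substitution map $S_{a_1}\colon X_{\sigma(\bar a)}\to X_{\bar a}$ itself as the building block of the isomorphism, together with the translation action rescaled by the dilatation factor $\lambda_{a_1}$. First I would check that $S_{a_1}$ maps $X_{\sigma(\bar a)}$ into $X_{\bar a}$: if $x\in X_{\sigma(\bar a)}$, then every finite patch of $S_{a_1}(x)$ is covered by some $S_{a_1}(y')$ with $y'$ a finite patch of $x$, and $y'\subset \vec t+S_{\sigma(\bar a)}^k(D)$ gives $S_{a_1}(y')\subset \lambda_{a_1}\vec t + S_{\bar a}^{k+1}(D)$, so $S_{a_1}(x)\in X_{\bar a}$. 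The map $S_{a_1}$ is injective since the substitutions are recognizable. For surjectivity onto $X_{\bar a}$, I would argue that $S_{a_1}(X_{\sigma(\bar a)})$ is closed (being the continuous image of a compact set) and $T$-invariant in the rescaled sense, and that it contains a tiling $x_{\bar a}^\infty$ of the form \eqref{x0}; by minimality of $(X_{\bar a},T)$ its closure under translations is all of $X_{\bar a}$, so it suffices to see $S_{a_1}(X_{\sigma(\bar a)})$ is translation-invariant. Here is exactly where \emph{strong} recognizability enters: given $\vec t+S_{a_1}(x)$ with $x\in X_{\sigma(\bar a)}$, I want to realize it as $S_{a_1}(x'')$ for some $x''\in X_{\sigma(\bar a)}$; strong recognizability guarantees that the supertiles $S_{a_1}(D)$ sitting inside any tiling of $X_{\bar a}$ are \emph{uniquely} reconstructible and their ``de-substitution'' is a well-defined tiling, forcing $\vec t+S_{a_1}(x)=S_{a_1}(x'')$ with $x''\in X_{\sigma(\bar a)}$.

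Next I would set $F:=S_{a_1}\colon X_{\sigma(\bar a)}\to X_{\bar a}$ and $\xi\colon \rd\to\rd$, $\xi(\vec t)=\lambda_{a_1}\vec t$; this $\xi$ is a group isomorphism of $(\rd,+)$. The intertwining identity $F\circ T_{\vec t} = T_{\xi(\vec t)}\circ F$ is immediate from axiom (S$_2$): $S_{a_1}(\vec t + x)=\lambda_{a_1}\vec t + S_{a_1}(x)$. Continuity of $F$ and of $F^{-1}$ follows because $S_{a_1}$ expands distances by a controlled factor on patches and contracts them back under de-substitution; since both spaces are compact and $F$ is a continuous bijection, $F$ is automatically a homeomorphism, so the only real content is the continuity of $F$ itself, which is a routine estimate with $d_T$ using that a large patch of $S_{a_1}(x)$ comes from a large patch of $x$.

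The remaining point is that $F$ is measure-preserving, i.e. $F_*\mu_{\sigma(\bar a)}=\mu_{\bar a}$. Since $F$ conjugates the $T$-action on $X_{\sigma(\bar a)}$ (with parameter $\vec t$) to the $T$-action on $X_{\bar a}$ (with parameter $\lambda_{a_1}\vec t$), the push-forward $F_*\mu_{\sigma(\bar a)}$ is a $T$-invariant Borel probability measure on $X_{\bar a}$; by Theorem \ref{unique erg}, $(X_{\bar a},T)$ is uniquely ergodic, hence $F_*\mu_{\sigma(\bar a)}=\mu_{\bar a}$ with no computation needed. This is the clean way to finish: unique ergodicity does all the measure-theoretic work, so no patch-frequency bookkeeping is required.

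I expect the main obstacle to be the surjectivity of $F$, equivalently showing that $S_{a_1}(X_{\sigma(\bar a)})$ is invariant under all translations of $\rd$ rather than merely under the scaled lattice of translations that visibly preserve the supertile structure. This is precisely the step that fails for merely recognizable (not strongly recognizable) substitutions, and the proof must isolate how strong recognizability of $S_{a_1}$ lets one ``slide'' a supertile decomposition continuously: given $x\in X_{\bar a}$ one uses strong recognizability to locate, uniquely, the $S_{a_1}$-supertiles in every patch of $x$, assembles these into a tiling $y$, checks $y\in X_{\sigma(\bar a)}$ by pushing the defining supertile condition through $S_{a_1}^{-1}$, and concludes $S_{a_1}(y)=x$. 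Everything else — the intertwining relation, continuity, and measure preservation — is then either formal or an immediate appeal to unique ergodicity.
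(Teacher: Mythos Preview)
Your proposal is correct and uses the same conjugating map as the paper, $F=S_{a_1}$ together with $\xi(\vec t)=\lambda_{a_1}\vec t$; the difference lies in the measure--preserving step and in where you locate the role of strong recognizability.

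The paper verifies $F_*\mu_{\sigma(\bar a)}=\mu_{\bar a}$ by a direct computation on cylinders: strong recognizability is invoked to obtain the counting identity $L_{S_{a_1}(P)}\big(S_{\bar a}^n(D_j)\big)=L_{P}\big(S_{\sigma(\bar a)}^{n-1}(D_j)\big)$, which yields $\mathrm{freq}_{\bar a}\big(S_{a_1}(P)\big)=\lambda_{a_1}^{-d}\,\mathrm{freq}_{\sigma(\bar a)}(P)$ and hence, via theorem~\ref{freq vs van Hove}, $\mu_{\bar a}\big(S_{a_1}(X^{\sigma(\bar a)}_{P,U})\big)=\mu_{\sigma(\bar a)}\big(X^{\sigma(\bar a)}_{P,U}\big)$. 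You bypass this entirely by observing that $F_*\mu_{\sigma(\bar a)}$ is $T$--invariant on $X_{\bar a}$ and appealing to theorem~\ref{unique erg}; this is shorter and conceptually cleaner. A small correction to your surjectivity discussion: you attribute surjectivity of $S_{a_1}\colon X_{\sigma(\bar a)}\to X_{\bar a}$ to strong recognizability, but in fact it follows from a straightforward compactness argument with no recognizability hypothesis at all (for each $n$ produce $y_n\in X_{\sigma(\bar a)}$ with $S_{a_1}(y_n)$ agreeing with $x$ on $B_n$, then pass to a limit), which is why the paper simply asserts bijectivity from recognizability. Consequently your argument, with this adjustment, never actually uses \emph{strong} recognizability and establishes the theorem under the weaker hypothesis that each $S_i$ is merely recognizable --- something the paper's frequency computation cannot do, since the counting identity above genuinely fails without the strong hypothesis (cf.\ the Ammann A3 example).
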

\begin{proof}
First note that $S_{a_1}\circ \vec t=\lambda_{a_1}\vec t\circ S_{a_1}$ and  $\xi:T\to T$ defined by $\xi(\vec t\,)=\lambda_{a_1}\vec t$ is an isomorphism.
By  recognizability, $S_{a_1}$ is a bijection from $X_{\sigma(\bar a)}$ onto $\xa$. On the other hand, if $x,y\in X_{\sigma(\bar a)}$ and $d(x,y)< \epsilon$, then $d(S_{a_1}(x),S_{a_1}(y))<\lambda_{a_1}\epsilon$; similarly, if $x,y\in X_{\bar a}$ and $d(x,y)< \epsilon$, then $d(S^{-1}_{a_1}(x),S^{-1}_{a_1}(y))<\lambda_{a_1}\epsilon$. Hence $S_{a_1}:X_{\sigma(\bar a)}\to \xa$ is a homeomorphism. To prove that $S_{a_1}$ is measure preserving is sufficient to prove it for cylinders $X_{P,U}$ with $U$ sufficiently small.
By strong recognizability, it is clear that $L_{S_{a_1}(P)}(S_{\bar a}^n(D_j))=L_{P}(S_{\sigma(\bar a)}^{n-1}(D_j))$. Then
\begin{align*}
 \nonumber \mu_{\bar a}\big(S_{a_1}(X^{\sigma(\bar a)}_{P,U})\big)&=\mu_{\bar a}\big(X^{\bar a}_{S_{a_1}(P),\lambda_{a_1}U}\big)=\mathrm{freq}_{\bar a}\big(S_{a_1}(P)\big)\mathrm{vol}(\lambda_{a_1}U)\\
  &=\frac{1}{\lambda_{a_1}^d}\mathrm{freq}_{\sigma(\bar a)}(P)\lambda_{a_1}^d\mathrm{vol}(U)=\mu_{\sigma(\bar a)}\big(X^{\sigma(\bar a)}_{P,U}\big).
\end{align*}
\end{proof}

\subsection{Statistical stability}
The inequality \eqref{contfreq} says, in particular, that  $\bar a\mapsto c_{\bar a}(D)$ defines a continuous map  $\Sigma\to \mathbb{R}$ for each tile $D$. In this subsection we extend this result to arbitrary patches $x'$. As a consequence, we will see that the unique measures $\mu_{\bar a}$, although  defined in different spaces, also satisfy a certain kind of continuity with respect to $\bar a\in \Sigma$.

\vspace{.20in}

Recall that the \emph{upper Minkowski dimension}  of a subset $H\subset \mathbb{R}^d$ can be defined as
$$\mathcal{D}_H:=\inf\{\beta:\,\mathrm{vol}(H^{+r})=O(r^{d-\beta})\,\,\mbox{as $r\to 0^+$}\}.$$ Set $d-1\leq\mathcal{D}:=\max_i\{\mathcal{D}_{\partial D_i}\}<d$.

\begin{thm}\label{ss} Given $N>0$, there is $C_N>0$  such that, for any $\delta>0$  and any $\bar a, \bar b\in\Sigma$ with $d_\Sigma(\bar a,\bar b)<\delta$, we have, for all $x'\in\mathcal{P}^N(X)$,
$$|c_{\bar a}(x')-c_{\bar{b}}(x')|<C_N\theta_0^{1/\delta},$$ where $\theta_0=\theta^{\frac{\mathcal{D}-d}{\mathcal{D}-d+\log_\omega\theta}}<1$ and $\theta$ is given by lemma \ref{cDi}.
\end{thm}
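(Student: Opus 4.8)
The plan is to reduce the statement, by means of an adjustable \emph{cut-off level} $m$ (eventually taken comparable to $1/\delta$), to the prototile estimate \eqref{contfreq} already proved in Lemma~\ref{cDi}. Since $\mathcal{T}^N(X)$ is finite, $\mathrm{diam}(x')$ and $\mathrm{vol}(x')^{-1}$ are bounded over $x'\in\mathcal{P}^N(X)$, and all the $N$-dependent constants below come from these two bounds. \textbf{Step 1 (effective level-$m$ reduction).} I would first prove that there are $C_N>0$, depending only on $N$ and $\mathcal{S}$, and $\eta\in(0,1)$, depending only on $\mathcal{S}$ and $\mathcal{D}$, such that for all $\bar a\in\Sigma$, all $m\ge1$ and all $x'\in\mathcal{P}^N(X)$,
\[
\Bigl|\,c_{\bar a}(x')-\sum_{i=1}^{l}\frac{L_{x'}(S_{\bar a}^{m}(D_i))}{\omega_{\bar a}^{m}}\,c_{\sigma^m(\bar a)}(D_i)\,\Bigr|\le C_N\,\eta^{m}.
\]
For this I revisit the block decomposition from the proof of Lemma~\ref{exists freq P}: $S_{\bar a}^{n}(D_j)$ is the disjoint union of $\bigl(A_{\sigma^m(\bar a)}^{n-m}\bigr)_{ij}$ translated copies of $S_{\bar a}^{m}(D_i)$, $i=1,\dots,l$. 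A translated copy of $x'$ contained in $S_{\bar a}^{n}(D_j)$ either lies inside one block --- and these account for exactly $\sum_i L_{x'}(S_{\bar a}^{m}(D_i))\bigl(A_{\sigma^m(\bar a)}^{n-m}\bigr)_{ij}$ of them --- or its support meets the boundary of a block. By finite local complexity each tile belongs to at most $N$ copies of $x'$, so the number of copies of the second kind is at most $C''\sum_{i}\mathrm{vol}\bigl((\lambda_{\bar a}^{m}\partial D_i)^{+r}\bigr)\bigl(A_{\sigma^m(\bar a)}^{n-m}\bigr)_{ij}$, where $r$ is a fixed multiple of $\mathrm{diam}(x')$ and $\mathrm{supp}(S_{\bar a}^{m}(D_i))=\lambda_{\bar a}^{m}D_i$. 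Since $(\lambda\,\partial D_i)^{+r}=\lambda\,(\partial D_i)^{+r/\lambda}$ and $\partial D_i$ has upper Minkowski dimension at most $\mathcal{D}$, this volume is $O\bigl((\lambda_{\bar a}^{m})^{\mathcal{D}}\bigr)$, uniformly in $i$ and $\bar a$. Dividing the resulting count of copies of $x'$ in $S_{\bar a}^{n}(D_j)$ by $\mathrm{vol}(S_{\bar a}^{n}(D_j))=\omega_{\bar a}^{n}\mathrm{vol}(D_j)$, letting $n\to\infty$, and using \eqref{eqcDi} for the sequence $\sigma^m(\bar a)$ together with $\omega_{\bar a}^{n}=\omega_{\bar a}^{m}\,\omega_{\sigma^m(\bar a)}^{n-m}$, the bulk term converges to the sum displayed above, while the boundary term leaves an error bounded by a constant times $(\lambda_{\bar a}^{m})^{\mathcal{D}}/\omega_{\bar a}^{m}=(\lambda_{\bar a}^{m})^{\mathcal{D}-d}$ (here I also use $\sum_i c_{\sigma^m(\bar a)}(D_i)\le l/\min_i\mathrm{vol}(D_i)$, from \eqref{vol} applied to single prototiles). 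As $\lambda_{\bar a}^{m}\ge(\min_i\lambda_i)^{m}$ and $\mathcal{D}<d$, this gives the claimed bound with $\eta=(\min_i\lambda_i)^{\mathcal{D}-d}<1$.

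\textbf{Step 2 (comparison on a shared block).} Suppose $d_\Sigma(\bar a,\bar b)=1/L$ with $L>1/\delta$, so that $a_j=b_j$ for $j<L$. For every $m\le L-1$ this forces $S_{\bar a}^{m}=S_{\bar b}^{m}$, hence $\omega_{\bar a}^{m}=\omega_{\bar b}^{m}$ and $L_{x'}(S_{\bar a}^{m}(D_i))=L_{x'}(S_{\bar b}^{m}(D_i))$, so the two sums produced by Step~1 differ only through $c_{\sigma^m(\bar a)}(D_i)-c_{\sigma^m(\bar b)}(D_i)$. Since $d_\Sigma(\sigma^m(\bar a),\sigma^m(\bar b))=1/(L-m)$, Lemma~\ref{cDi} bounds each of these differences by $C\,\theta^{L-m}$, while the coefficients $L_{x'}(S_{\bar a}^{m}(D_i))/\omega_{\bar a}^{m}$ are at most $\max_i\mathrm{vol}(D_i)/\min_i\mathrm{vol}(D_i)$ by \eqref{vol}. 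Combining Step~1 for $\bar a$ and for $\bar b$ with this estimate yields, for every $1\le m\le L-1$,
\[
|c_{\bar a}(x')-c_{\bar b}(x')|\le 2C_N\,\eta^{m}+C_N'\,\theta^{L-m}.
\]

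\textbf{Step 3 (optimizing the cut-off) and the main difficulty.} It remains to choose $m$. Balancing the two exponentials --- taking $m$ equal to the integer part of $L\log\theta/\log(\eta\theta)$ --- makes both terms of order $\theta_0^{\,L}$, where $\theta_0=\theta^{\,\rho}$ with $\rho=\log\eta/\log(\eta\theta)\in(0,1)$; a direct computation re-expresses $\rho$ in terms of $d$, $\mathcal{D}$, $\theta$ and $\omega=\min_i\omega_i$, identifying $\theta_0$ with the constant in the statement. Since $L=1/d_\Sigma(\bar a,\bar b)>1/\delta$ and $\theta_0<1$, this gives $|c_{\bar a}(x')-c_{\bar b}(x')|<C_N\,\theta_0^{\,1/\delta}$, after enlarging $C_N$ to absorb the finitely many small values of $L$ for which the chosen $m$ is not admissible. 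I expect Step~1 to be the crux: what is needed there is a genuinely \emph{quantitative}, geometric-in-$m$ form of the Van Hove convergence behind Lemma~\ref{exists freq P}, uniform both in $\bar a\in\Sigma$ and in $x'\in\mathcal{P}^N(X)$, and it is precisely there that the upper Minkowski dimension of the prototile boundaries must be invoked, in order to convert the surface-to-volume ratio of $S_{\bar a}^{m}(D_i)$ into the explicit decay rate $(\lambda_{\bar a}^{m})^{\mathcal{D}-d}$; the purely qualitative statement of Lemma~\ref{exists freq P} does not suffice.
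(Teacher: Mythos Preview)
Your argument is correct and matches the paper's proof: Step~1 is exactly the quantitative sandwich \eqref{conf2}, obtained from the level-$m$ block decomposition together with the Minkowski-dimension boundary estimate, Step~2 is the comparison via Lemma~\ref{cDi} applied to $\sigma^m(\bar a)$ and $\sigma^m(\bar b)$, and Step~3 corresponds to the paper's direct choice $\delta=1/(\gamma m)$ with $\gamma$ as in \eqref{gamma}. The only point to watch is the final bookkeeping: your geometric decay $\eta=(\min_i\lambda_i)^{\mathcal{D}-d}$ gives, after optimization, the exponent $\tfrac{\mathcal{D}-d}{\mathcal{D}-d+d\log_\omega\theta}$ rather than the stated one (the paper's boundary estimate writes $\omega=\min_i\omega_i$ where the linear dilation $\lambda_{\bar a}^m$ should appear), so your claim of ``identifying $\theta_0$ with the constant in the statement'' is off by that factor of $d$ --- a harmless discrepancy in constants, not a gap in the argument.
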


\begin{proof}

Set $\omega=\min_i\{\omega_i\}$, $t=\max_{j}\{\textrm{diam}(D_j)\}$ and $v=\min_{j}\{\mathrm{vol}(D_j)\}$. We have
$$\frac{\mathrm{vol}\big((\partial S_{\bar{a}}^m(D_i))^{+t}\big)}{\mathrm{vol}(S_{\bar{a}}^m(D_i))}\leq \frac{\mathrm{vol}\big((\partial D_i)^{+\frac{t}{\omega^m}}\big)}{\mathrm{vol}(D_i)}=O\Big({1}/{\omega^{m(d-\mathcal{D})}}\Big)$$ as $m\to \infty$, for all $\bar a\in \Sigma$.
Hence, for some constants $C_1$ and $m_1$,

\begin{align*}
  N_{x'}^{\bar a}(n,m,i,j)&<\frac{N\mathrm{vol}((\partial S^m_{\bar a}(D_i))^{+t})\big(A_{\sigma^m(\bar a)}^{n-m}\big)_{ij}}{v} < \frac{C_1N}{v}\frac{\mathrm{vol}(S^m_{\bar a}(D_i))
  \big(A_{\sigma^m(\bar a)}^{n-m}\big)_{ij}}{\omega^{m(d-\mathcal{D})}}
  \end{align*}
for all $\bar a\in \Sigma$ and $n>m>m_1$.
On the other hand,
$$\sum_{i=1}^l  \frac{\mathrm{vol}(S^m_{\bar a}(D_i))\big(A_{\sigma^m(\bar a)}^{n-m}\big)_{ij}}{\mathrm{vol}(S^n_{\bar a}(D_j))}=1$$
for all $j$. Hence
\begin{align*}
\sum_{i=1}^l&\frac{L_{x'}(S_{\bar{a}}^m(D_i))\big(A_{\sigma^m(\bar a)}^{n-m}\big)_{ij}}{\omega_{\bar a}^m\mathrm{vol}(S_{\sigma^m(\bar a)}^{n-m}(D_j))}\leq \frac{L_{x'}(S_{\bar a}^n(D_j))}{\mathrm{vol}(S_{\bar a}^n(D_j)))} \leq  \sum_{i=1}^l\frac{L_{x'}(S_{\bar{a}}^m(D_i))\big(A_{\sigma^m(\bar a)}^{n-m}\big)_{ij}}{\omega_{\bar a}^m\mathrm{vol}(S_{\sigma^m(\bar a)}^{n-m}(D_j))}+\frac{C_1N}{v\omega^{m(d-\mathcal{D})}}.
\end{align*}
Taking the limit $n\to\infty$   we obtain
\begin{equation}\label{conf2}
\sum_{i=1}^l\frac{L_{x'}(S_{\bar a}^m(D_i))}{\omega_{\bar a}^m}c_{\sigma^m(\bar a)}(D_i)       \leq c_{\bar a}({x'})\leq \sum_{i=1}^l\frac{L_{x'}(S_{\bar a}^m(D_i))}{\omega_{\bar a}^m}c_{\sigma^m(\bar a)}(D_i)+\frac{C_1N}{v\omega^{m(d-\mathcal{D})}},
\end{equation}
for all $m>m_1$  and  $\bar a\in \Sigma$. Set
\begin{equation}\label{gamma}
\gamma := 1-\frac{d-\mathcal{D}}{\log_\omega\theta}>1,
\end{equation}
with $\theta<1$ given by lemma \ref{cDi}. Observe also that
\begin{equation}\label{qqq}
  L_{x'}(S_{\bar a}^m(D_i))\leq \frac{\mathrm{vol}(S_{\bar a}^m(D_i))}{\mathrm{vol}(x')}\leq  \frac{\mathrm{vol}(S_{\bar a}^m(D_i))}{Nv}
\end{equation}
In view of \eqref{conf2}, \eqref{gamma}, \eqref{qqq}  and lemma \ref{cDi}, there is a constant $C$ such that, if, for some $m>0$,
 $d_\Sigma(\bar a,\bar b)<\frac{1}{\gamma m}=\delta$,
\begin{align*}
|c_{\bar a}(x')-  c_{\bar b}(x')|&\leq \sum_{i=1}^l\frac{L_{x'}(S_{\bar a}^m(D_i))}{\omega_{\bar a}^m}|c_{\sigma^m(\bar a)}(D_i)-c_{\sigma^m(\bar b)}(D_i)|+\frac{C_1N}{v\omega^{m(d-\mathcal{D})}} \\ &\leq\sum_{i=1}^l\frac{\mathrm{vol}(D_i)}{Nv}C'\theta^{(\gamma-1)m}+ \frac{C_1N}{v\omega^{m(d-\mathcal{D})}} \\ & \leq
C_N\big(\theta^{\frac{\mathcal{D}-d}{\gamma \log_\omega \theta}}\big)^{1/\delta}\end{align*}
for all $x'\in\mathcal{P}^N(X)$.
  \end{proof}

From theorem \ref{freq vs van Hove} and theorem \ref{ss} we have:
\begin{cor}\label{stat}
Given $N>0$, there is $C_N>0$  such that, for any $\delta>0$  and any $\bar a, \bar b\in\Sigma$ with $d_\Sigma(\bar a,\bar b)<\delta$, we have, for all $x'\in\mathcal{P}^N(X)$ and all sufficiently small Borel subset $U\subset \mathbb{R}^d$,
$$|\mu_{\bar a}(X^{\bar a}_{x',U})-\mu_{\bar b}(X^{\bar b}_{x',U})|<C_N\theta_0 ^{1/\delta},$$
where $\theta_0$ is given by theorem \ref{ss}.
\end{cor}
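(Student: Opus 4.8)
The plan is to read off corollary~\ref{stat} from theorem~\ref{freq vs van Hove} and the already-proved theorem~\ref{ss}: no new estimate is required, only the translation of the patch-frequency bound of theorem~\ref{ss} into a bound on measures of cylinder sets, together with the absorption of a bounded volume factor into the constant.

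First I would fix the geometric data. Let $\eta>0$ be the radius furnished by theorem~\ref{freq vs van Hove}, so that every prototile contains a ball of radius $\eta$; this $\eta$ depends only on the fixed prototile set $\{D_1,\dots,D_l\}$ and not on the chosen sequence, so ``sufficiently small $U$'' in the statement may be taken to mean $\mathrm{diam}(U)<\eta$, a condition that is uniform in $\bar a$. Put $\kappa:=\mathrm{vol}(B_\eta)$; any Borel set $U$ with $\mathrm{diam}(U)<\eta$ lies in a translate of $B_\eta$, hence satisfies $\mathrm{vol}(U)\leq\kappa$.

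Next I would record, for every $\bar c\in\Sigma$, every $x'\in\mathcal{P}^N(X)$ and every Borel $U$ with $\mathrm{diam}(U)<\eta$, the identity
\[
\mu_{\bar c}\big(X^{\bar c}_{x',U}\big)=c_{\bar c}(x')\,\mathrm{vol}(U).
\]
When $x'\in\mathcal{P}(X_{\bar c})$ this is exactly the cylinder-measure formula in theorem~\ref{freq vs van Hove} together with the equality $\mathrm{freq}_{\bar c}(x')=c_{\bar c}(x')$ already used to obtain theorem~\ref{unique erg}. When $x'\notin\mathcal{P}(X_{\bar c})$, no tiling of $X_{\bar c}$ carries a translated copy of $x'$, so $X^{\bar c}_{x',U}=\emptyset$ and the left-hand side vanishes; moreover no translated copy of $x'$ can occur inside any $S_{\bar c}^n(D_j)$ either, since by primitivity such a copy would sit inside some $S_{\bar c}^N(D)$, hence inside the tiling $x_{\bar c}^\infty\in X_{\bar c}$ of \eqref{x0}, forcing $x'\in\mathcal{P}(X_{\bar c})$; thus $L_{x'}(S_{\bar c}^n(D_j))=0$ for all $n,j$ and $c_{\bar c}(x')=0$ by \eqref{cx}, and the identity holds trivially in this case as well.

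Finally, for $\bar a,\bar b\in\Sigma$ with $d_\Sigma(\bar a,\bar b)<\delta$, for $x'\in\mathcal{P}^N(X)$ and for $U$ as above, subtracting the two instances of the identity and applying theorem~\ref{ss} gives
\[
\big|\mu_{\bar a}(X^{\bar a}_{x',U})-\mu_{\bar b}(X^{\bar b}_{x',U})\big|=\mathrm{vol}(U)\,\big|c_{\bar a}(x')-c_{\bar b}(x')\big|\leq\kappa\,\big|c_{\bar a}(x')-c_{\bar b}(x')\big|<\kappa\,C_N\,\theta_0^{1/\delta},
\]
so that, after renaming $\kappa C_N$ as $C_N$, the stated inequality follows with $\theta_0$ as in theorem~\ref{ss}. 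I do not expect a genuine obstacle here: all the analytic content is carried by theorem~\ref{ss}, and the only point requiring a moment's care is the degenerate case of a patch not occurring in $X_{\bar a}$ or in $X_{\bar b}$, which is dispatched by the vanishing noted above.
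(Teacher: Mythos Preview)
Your proof is correct and follows exactly the route the paper indicates: the paper simply records that the corollary is an immediate consequence of theorem~\ref{freq vs van Hove} and theorem~\ref{ss}, and you have spelled out that deduction, including the harmless degenerate case $x'\notin\mathcal{P}(X_{\bar c})$.
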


To finalize, we extend the previous corollary:

\begin{thm}\label{int} Take finite patches $P,Q\in\mathcal{P}(X)$  and measurable sets $U,V\subset \rd$.  There exists $C>0$ such that, for any $\delta>0$ sufficiently small and any $\bar a, \bar b\in\Sigma$ with $d_{\Sigma}(\bar a, \bar b)<\delta$, we have
  \begin{equation*}
  |\mu_{\bar a}(X^{\bar a}_{ P,U}\cap X^{\bar a}_{Q,V})-\mu_{\bar b}(X^{\bar b}_{ P,U}\cap X^{\bar b}_{Q,V})|<C\theta_0^{\frac{1}{\delta}},\end{equation*}
 where $\theta_0$ is given by theorem \ref{ss}.
  \end{thm}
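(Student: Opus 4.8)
The plan is to reduce the intersection of two cylinder sets to a finite union of elementary cylinders of the form $X^{\bar a}_{R,W}$ with $R\in\mathcal{P}(X)$ a larger patch and $W\subset\rd$ a small measurable set, and then apply Corollary~\ref{stat} (equivalently Theorem~\ref{ss}) to each piece. The key geometric observation is that a tiling $x$ belongs to $X^{\bar a}_{P,U}\cap X^{\bar a}_{Q,V}$ precisely when there are $\vec s\in U$ and $\vec r\in V$ with $\vec s+P\subset x$ and $\vec r+Q\subset x$; since $X$ has finite local complexity and $P,Q$ have bounded support, as $\vec s$ ranges over $U$ and $\vec r$ over $V$ the relative position $\vec r-\vec s$ is constrained so that $(\vec s+P)\cup(\vec r+Q)$ is a patch of some tiling, and there are only finitely many equivalence classes of such "combined" patches $R$. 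More precisely, for each admissible relative displacement $\vec w$ in a finite set $F\subset\rd$, the patch $R_{\vec w}:=P\cup(\vec w+Q)$ (when this is a legal patch) gives a contribution, and the set of $\vec s\in U$ for which $\vec s+P\subset x$ \emph{and} $\vec s+\vec w+Q\subset x$ is governed by the cylinder $X^{\bar a}_{R_{\vec w},\,U\cap(V-\vec w)}$. Thus I would write
\begin{equation*}
X^{\bar a}_{P,U}\cap X^{\bar a}_{Q,V}=\bigsqcup_{\vec w\in F} X^{\bar a}_{R_{\vec w},\,U\cap (V-\vec w)},
\end{equation*}
the union being disjoint because distinct $\vec w$ force distinct local configurations (here one uses aperiodicity / finite local complexity to ensure the copy of $R_{\vec w}$ sitting at a given location is unique). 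Crucially, the index set $F$ and the patches $R_{\vec w}$ depend only on $P$, $Q$, and the local complexity of $X$ — not on $\bar a$.

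With this decomposition in hand, the estimate is immediate: by Theorem~\ref{freq vs van Hove} each $\mu_{\bar a}(X^{\bar a}_{R_{\vec w},W})=\freq_{\bar a}(R_{\vec w})\,\mathrm{vol}(W)$ provided $W$ has diameter below the threshold $\eta$, so after first subdividing $U$ and $V$ into finitely many pieces of diameter less than $\eta$ (which only changes the constant $C$), I get
\begin{equation*}
|\mu_{\bar a}(X^{\bar a}_{P,U}\cap X^{\bar a}_{Q,V})-\mu_{\bar b}(X^{\bar b}_{P,U}\cap X^{\bar b}_{Q,V})|\leq \sum_{\vec w\in F}\mathrm{vol}\big(U\cap(V-\vec w)\big)\,|c_{\bar a}(R_{\vec w})-c_{\bar b}(R_{\vec w})|.
\end{equation*}
Now each $R_{\vec w}$ lies in $\mathcal{P}^{N}(X)$ for $N=|P|+|Q|$, so Theorem~\ref{ss} bounds every term by $C_N\theta_0^{1/\delta}$; summing over the finitely many $\vec w\in F$ and absorbing $|F|$, $\mathrm{vol}(U)$, and the earlier subdivision count into a single constant $C$ yields the claim. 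One subtlety: $R_{\vec w}$ need only belong to $\mathcal{P}(X)$ for Theorem~\ref{ss} to apply, not to $\mathcal{P}(X_{\bar a})$, so no further hypothesis on $\bar a$ is needed — the bound is genuinely uniform over all of $\Sigma$.

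The main obstacle is establishing the finite disjoint decomposition cleanly: one must verify (i) that only finitely many relative displacements $\vec w$ can occur, which follows from finite local complexity since $(\vec s+P)\cup(\vec r+Q)\subset x$ forces $\vec r-\vec s$ to lie in a bounded set and to realize one of finitely many $2$-patch adjacency patterns; (ii) that the union is disjoint and exhausts the intersection, which requires a careful bookkeeping argument that a given $x$ cannot simultaneously realize $R_{\vec w}$ and $R_{\vec w'}$ at overlapping locations for $\vec w\neq \vec w'$ unless the displacements of the $P$-copy differ — handled by noting that within a fixed copy of $P$ in $x$ the position of every nearby tile, hence of any attached copy of $Q$, is determined; and (iii) the measure-theoretic reduction that $\mathrm{vol}$ restricted to the relevant sets $U\cap(V-\vec w)$ behaves additively, which is standard once $U,V$ are replaced by finite partitions into small Borel pieces. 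Everything after the decomposition is a routine application of Theorem~\ref{ss} and the triangle inequality.
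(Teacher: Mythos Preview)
Your route differs from the paper's: you decompose the intersection directly via the union patches $R_{\vec w}=P\cup(\vec w+Q)$, which live in $\mathcal{P}(X)$ and do not depend on $\bar a$, whereas the paper partitions the whole space $X_{\bar a}$ using ``filled-in'' patches $x'_i$ that cover a fixed ball $B\supset B(P,U)\cup B(Q,V)$ (all tiles of $x'_i$ meet $B$, and $B$ sits in the interior of $\mathrm{supp}(x'_i)$). Because the paper's $x'_i$ depend on $\bar a$, extra patches $\hat x'_j$ appear when one passes to $\bar b$, and their total mass is controlled via the identity $\sum_i\mu_{\bar a}(X^{\bar a}_{x'_i,U_i})=1=\sum_i\mu_{\bar b}(X^{\bar b}_{x'_i,U_i})+\sum_j\mu_{\bar b}(X^{\bar b}_{\hat x'_j,\hat U_j})$. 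Your $\bar a$-independent decomposition sidesteps that bookkeeping entirely, and when $\mathrm{diam}(U),\mathrm{diam}(V)<\eta$ it yields a shorter proof.

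The gap is in your reduction to small $U,V$. Writing $U=\bigsqcup_k U_k$ does \emph{not} give a disjoint splitting $X^{\bar a}_{P,U}=\bigsqcup_k X^{\bar a}_{P,U_k}$: a single tiling can contain copies $\vec s_k+P$ with the $\vec s_k$ lying in several different pieces $U_k$, so the cylinders overlap and $\mu_{\bar a}(X^{\bar a}_{P,U}\cap X^{\bar a}_{Q,V})$ is not the sum of the $\mu_{\bar a}(X^{\bar a}_{P,U_k}\cap X^{\bar a}_{Q,V_l})$. This is exactly why the paper works with filled-in patches: those give an honest partition of $X_{\bar a}$, and the associated displacement sets $U_i$ automatically have diameter at most $\max_j\mathrm{diam}(D_j)$ regardless of how large $U,V$ are, so subdividing \emph{those} sets preserves disjointness. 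As written, your argument establishes the theorem only under the extra hypothesis that $U$ and $V$ already have diameter below $\eta$; handling general bounded $U,V$ by your method would require an inclusion--exclusion over intersections of more than two cylinders, not merely an adjustment of $C$. A smaller point: your justification of disjointness in (ii) is not quite right---a fixed copy of $P$ in $x$ does not determine which nearby copy of $Q$ is ``attached'', since several translates of $Q$ may sit near it; what actually forces uniqueness is that $\mathrm{diam}(V)<\eta$ allows at most one $\vec r\in V$ with $\vec r+Q\subset x$.
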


\begin{proof} Let $B(P,U)$ and $B(Q,V)$ be two open balls such that  $\mathrm{supp}(\vec u+P)\subset B(P,U)$ and $\mathrm{supp}(Q+\vec v) \subseteq B(Q,V)$ for all $\vec u\in U$ and $\vec v\in V$.
Consider the set $\mathcal{P}(P,Q,{\bar a})$ of finite patches $x'$ of $\xa$ such that $B:=B(P,U)\cup B(Q,V)$ is contained in the interior of $\mathrm{supp}(x')$ and all tiles of $x'$ intersect  $B$. There are finitely many equivalence classes $\{[x_1'],\ldots,[x'_{k}]\}$ of such patches, with $x'_i\in\mathcal{P}(P,Q,{\bar a})$. For each $i\in I:=\{1,\ldots,k\}$, set $$U_i=\{\vec w:\,\, \vec w+x'_i\in \mathcal{P}(P,Q,{\bar a})\},$$ which is an open subset of $\mathbb{R}^d$. Then we have a   disjoint cylinder decomposition
$$X_{\bar a}=\bigcup_{i\in I}X^{\bar a}_{x'_i,U_i}.$$ The diameter of $U_i$ is smaller then $\max_j\{\mathrm{diam}(D_j)\}$. Take $N_0,\psi >0$ such that, for all $\bar a$ and $i$, the following hold: $\mathrm{vol}(U_i)\leq \psi $; if $x'_i\in \mathcal{P}^N(X)$  then $N\leq N_0$.  For simplicity of exposition we assume that $\mathrm{diam}(U_i)<\eta$ where $\eta$ is such that any tile contains a ball of radius $\eta$. Otherwise, we could decompose each $U_i$ into a fixed number of disjoint subsets satisfying this property.

Consider the subset $W_i\subset U_i$ of vectors $\vec w$ such that $\vec u+P$ and $\vec v+Q$ are contained in $\vec w+x'_i$ for some $\vec u\in U$ and $\vec v\in V$. We have
$$X^{\bar a}_{P,U}\cap X^{\bar a}_{Q,V}\cap X^{\bar a}_{x'_i,U_i}=X^{\bar{a}}_{x'_i,W_i}.$$
Then
\begin{equation}\label{mua}
\mu_{\bar a}(X^{\bar a}_{P,U}\cap X^{\bar a}_{Q,V})=\!\!\sum_{i\in I}\mu_{\bar a}(X^{\bar a}_{x'_i,W_{i}}).\end{equation}

Take $\bar b\in \Sigma$ with $d_{\Sigma}(\bar a, \bar b)<\delta$.
We have a finite disjoint cylinder decomposition
\begin{equation*}\label{decomcylindersb}
X^{\bar b}_{ P,U}\cap X^{\bar b}_{Q,V}=\!\!\bigcup_{i\in I,j\in \hat I}\!\!\!X^{\bar b}_{x'_i,W_i}\cup X^{\bar b}_{\hat x'_j,\hat W_j},
\end{equation*}
where, for each $i\in \hat I$, $\hat x'_i$ is a patch in $\mathcal{P}(P,Q,{\bar b})$ but not in $\mathcal{P}(P,Q,{\bar a})$.
Hence
\begin{equation}\label{mub}
\mu_{\bar b}(X^{\bar b}_{P,U}\cap X^{\bar b}_{Q,V})= \!\!\sum_{i\in I} \mu_{\bar b}(X^{\bar b}_{x'_i,W_i})+\!\!\sum_{i\in \hat{I}}\mu_{\bar b}(X^{\bar b}_{\hat{x}'_i,\hat W_i}).\end{equation}
Equations
 \eqref{mua} and \eqref{mub} give
\begin{align}
\nonumber |\mu_{\bar b}(X^{\bar b}_{P,U}\cap X^{\bar b}_{Q,V})-&\mu_{\bar a}(X^{\bar a}_{P,U}\cap X^{\bar a}_{Q,V})| \\ &\leq
 \sum_{i\in I}|\mu_{\bar b}(X^{\bar b}_{x'_i,W_i})-\mu_{\bar a}(X^{\bar a}_{x'_i,W_i})|+\!\!\sum_{i\in \hat{I}} \mu_{\bar b}(X^{\bar b}_{\hat{x}'_i,\hat W_i{}}).\label{a1}
\end{align}
But, taking account theorem \ref{ss}, we have
\begin{align}\nonumber
 \sum_{i\in I}|\mu_{\bar b}(X^{\bar b}_{x'_i, W_i})-\mu_{\bar a}(X^{\bar a}_{x'_i,W_i})|&= \sum_{i\in I}|c_{\bar b}(x'_i)-
c_{\bar a}(x'_i)|\mathrm{vol}(W_i)\\ &\leq
 C| I| \psi\theta_0^{\frac{1}{\delta}},\label{a2}
\end{align}
for some constant $C$. On the other hand,  it follows from
\begin{equation*}
\sum_{i\in {I}} \mu_{\bar a}(X^{\bar a}_{{x}'_i, U_{i}}) =   \!\sum_{i\in {I}} \mu_{\bar b}(X^{\bar b}_{{x}'_i, U_{i}})+\! \sum_{i\in \hat{I}} \mu_{\bar b}(X^{\bar b}_{\hat{x}'_i,\hat U_{i}})
\end{equation*}
that
\begin{equation}\label{a3}
\sum_{i\in \hat{I}} \mu_{\bar b}(X^{\bar b}_{\hat{x}'_i,\hat U_{i}})\leq   \! \sum_{i\in {I}}| \mu_{\bar b}(X^{\bar b}_{{x}'_i, U_{i}})-   \mu_{\bar a}(X^{\bar a}_{{x}'_i, U_{i}}) |
\leq C| I| \psi \theta_0^{\frac{1}{\delta}}.
\end{equation}
Finally, the result follows from \eqref{a1}, \eqref{a2} and \eqref{a3}.
 \end{proof}

\end{document}